\renewcommand\theequation{\thesection.\@arabic\c@equation}
\newcommand{\RR}{\mathbb{R}}
\newcommand{\Z}{\mathbb{Z}}
\newcommand{\N}{\mathbb{N}}
\newcommand{\R}{\mathbb{R}}
\newcommand{\dd}{{\rm\,d}}
\chardef\csname pre amssym.def
\def\undefine#1{\let#1\undefined}
\def\newsymbol#1#2#3#4#5{\let\next@\relax
	\ifnum#2=\@ne\let\next@\msafam@\else
	\ifnum#2=\tw@\let\next@\msbfam@\fi\fi
	\mathchardef#1="#3\next@#4#5}
\def\mathhexbox@#1#2#3{\relax
	\ifmmode\mathpalette{}{\m@th\mathchar"#1#2#3}%
	\else\leavevmode\hbox{$\m@th\mathchar"#1#2#3$}\fi}
\def\hexnumber@#1{\ifcase#1 0\or 1\or 2\or 3\or 4\or 5\or 6\or 7\or 8\or
	9\or A\or B\or C\or D\or E\or F\fi}
\font\teneufm=eufm10 \font\seveneufm=eufm7 \font\fiveeufm=eufm5
\newcommand{\eqn}{\begin{eqnarray}}
	\newcommand{\een}{\end{eqnarray}}
\newtheorem {Theorem}  {Theorem}
\numberwithin{Theorem}{section}
\newtheorem{Lemma}[Theorem]{Lemma}
\newtheorem{Proposition}[Theorem]{Proposition}
\newtheorem{Corollary}[Theorem]{Corollary}
\theoremstyle{definition}
\theoremstyle{remark}
\newtheorem{Remark}[Theorem]{Remark}
\newcommand{\ZZ}{{\mathbb Z}}
\begin{document}
	
\title[On Leray solutions with algebraic decay]
{On the topological size of the class of Leray solutions with algebraic decay}
	
\author[L. Brandolese]{Lorenzo Brandolese}
\address[L. Brandolese]{Institut Camille Jordan. 
Université de Lyon, Université Claude Bernard Lyon~1, 
69622 Villeurbanne Cedex. France}
\email{brandolese@math.univ-lyon1.fr}
	
\author[C. F. Perusato]{Cilon F. Perusato}
\address[C. F. Perusato]{Departamento de Matem\'atica. 
Universidade Federal de Pernambuco, Recife, PE 50740-560. Brazil}
\email{cilon.perusato@ufpe.br}
	
\author[P.R. Zingano]{Paulo R. Zingano}
\address[P.R. Zingano]{Departamento de Matemática Pura e Aplicada. 
Universidade Federal do Rio Grande do Sul, 
Porto Alegre, RS 91509-900. Brazil.}
\email{paulo.zingano@ufrgs.br}
	
		
\thanks{C. F. Perusato was partially supported by 
CAPES\:\!-PRINT-\:\!88881.311964/2018-01.} 
	
\keywords{Wiegner's theorem, Navier-Stokes equations, Large time behavior.}
	
\subjclass[2000]{35B40  (primary), 35Q35 (secondary)}
	
\date{\today}

%
%
%
%
	
\begin{abstract}
In 1987, Michael Wiegner in his seminal paper \cite{Wiegner1987} 
provided an important result regarding the
energy decay of Leray solutions $ \boldsymbol u(\cdot,t) $
to the
incompressible Navier-Stokes in $ \mathbb{R}^{n}$:
if the associated Stokes flows 
had their $\hspace{-0.020cm}L^{2}\hspace{-0.050cm}$ norms 
bounded by
$ O(1 + t)^{-\;\!\alpha} $
for some $ 0 < \alpha \leq (n+2)/4 $,
then the same would be true
of
$ \|\hspace{+0.020cm} \boldsymbol u(\cdot,t) 
\hspace{+0.020cm} \|_{L^{2}(\mathbb{R}^{n})} $.
The converse also holds, \\
as shown by Z.\,Skal\'ak\,\cite{Skalak2014}
and by our analysis below,
which uses a more straightforward argument.
As an application of these results, 
we discuss the genericity problem of algebraic decay
estimates for Leray solutions of the unforced Navier--Stokes equations.
In particular, 
we prove that Leray solutions 
with algebraic decay generically satisfy two-sided bounds of the form 
$(1+t)^{-\alpha}\lesssim 
\| \boldsymbol u(\cdot,t)\|_{L^2(\R^n)}
\lesssim (1+t)^{-\alpha}$.
\end{abstract}
	
\maketitle
	
\definecolor{OliveGreen1}{rgb}{0,0.6,0}
\definecolor{OliveGreen2}{cmyk}{0.64,0,0.95,0.40}

%
%
%
%

\section{Introduction}
%
%
%

An important property of Leray solutions
$ \mbox{\boldmath $u$}(\cdot,t) $
to the incompressible Navier--Stokes equations
is given by Wiegner's\hspace{-0.020cm}
\hspace{-0.020cm}
theorem\hspace{-0.020cm}
(\cite{Wiegner1987},\;\!\;\!p.\,305), 
which says, among other estimates, that
$ \|\hspace{+0.020cm} \mbox{\boldmath $u$}(\cdot,t) \hspace{+0.020cm}
\|_{L^{2}(\mathbb{R}^{n})} \hspace{-0.020cm}=\hspace{+0.020cm}
O\hspace{+0.010cm}(1 + t)^{-\;\!\alpha} $
when 
$ \|\hspace{+0.020cm} \mbox{\boldmath $v$}(\cdot,t) \hspace{+0.020cm}
\|_{L^{2}(\mathbb{R}^{n})} \hspace{-0.020cm}=\hspace{+0.020cm}
O\hspace{+0.010cm}(1 + t)^{-\;\!\alpha} $
for some $ 0 < \alpha \leq (n+2)/4 $,
where $ \mbox{\boldmath $v$}(\cdot,t) $
is the associated Stokes flow
defined in (\ref{eqn_stokes_flow}) below.
This property is also valid if
external forces are present,
under some appropriate assumptions.
%
%
A natural 
question is whether the converse property 
holds, that is,
if the knowledge 
that
$ \|\hspace{+0.020cm} \mbox{\boldmath $u$}(\cdot,t) \hspace{+0.020cm}
\|_{L^{2}(\mathbb{R}^{n})} \hspace{-0.020cm}=\hspace{+0.020cm}
O\hspace{+0.010cm}(1 + t)^{-\;\!\alpha} $
for some $ 0 < \alpha \leq (n+2)/4 $
will entail
that
$ \|\hspace{+0.020cm} \mbox{\boldmath $v$}(\cdot,t) \hspace{+0.020cm}
\|_{L^{2}(\mathbb{R}^{n})} \hspace{-0.020cm}=\hspace{+0.020cm}
O\hspace{+0.010cm}(1 + t)^{-\;\!\alpha} $
as well.
A positive answer was given by Skal\'ak's ``inverse Wiegner's theorem'' \;\cite{Skalak2014},
using an elaborated argument.

The purpose of this paper is to show a more straightforward approach
to obtain inverse Wiegner's theorem,
in any dimension $ n \geq 2 $,
with or without external forces, and to present some applications.
Our applications concern the genericity problem of
upper and lower algebraic bounds for solutions 
of the unforced Navier--Stokes equations. 

We will establish two facts: 
first of all, we prove that 
\emph{the class of solutions with an algebraic decay of the $L^2$-norm
is Baire-negligible}, in the class of Leray's solutions of the unforced Navier--Stokes equations. To do this, 
we  will endow the class of Leray's solutions of the 
unforced Navier--Stokes equations
with a topology that makes it a Baire space 
(the countable union of closed sets with empty interior has an empty interior) 
and prove that the subset of solutions with algebraic decay is meager.
This will be formalized by Theorem~\ref{th:genler} below.

The second fact that we will establish is the following: 
solutions of the unforced Navier-Stokes equations with $L^2$-algebraic
decay $O(1+t)^{-\alpha}$, with $0<\alpha<(n+2)/4$, 
\emph{generically do satisfy the two-sided bounds} 
$(1+t)^{-\alpha}\lesssim 
\|\hspace{+0.020cm} \boldsymbol u(\cdot,t) \hspace{+0.020cm}
\|_{L^2(\RR^n)}\lesssim (1+t)^{-\alpha}$.
Proving this second assertion requires endowing a Baire topology to the class of solutions with algebraic decay and establishing that the set of solutions
satisfying the corresponding lower bound is residual in the former class. 
This second stamement will be formalized by Theorem~\ref{th:genler2} below.

The result of Theorem~\ref{th:genler} is in the same spirit
of Schonbek's non-uniform decay result for solutions arising from general $L^2_\sigma$ data, described in~\cite[Section~2.3]{BrandoleseSchonbek2018}. 
Theorem~\ref{th:genler} should be compared also with the recent result~\cite[Proposition~5.4]{MR4541339},
where a similar problem was addressed for distributional 
(also called very weak) solutions of the Navier--Stokes equations. 
The method of \cite{MR4541339} relies on a subtle, 
abstract nonlinear open mapping principle: 
it is completely different than ours, that is essentially based 
on both direct and inverse Wiegner's theorem and 
on elementary topological considerations.

The lower bound problem addressed in Theorem~\ref{th:genler2} 
is closely related to the theory of decay characters, 
introduced in~\cite{MR2493562} and further
developed in \cite{MR3493117, MR3355116}.
Theorem~\ref{th:genler2} sheds new light on such a theory. 
Indeed, this theorem can be applied to quantify how large is, 
topologically,  the set of initial data for which 
a decay character does exist.

%
%
%
%
\bigskip
\paragraph{\bf Notations.}
For a function 
$\!\;\!\;\! \mbox{\bf w} = (\mbox{w}_1, \!\;\!..., \mbox{w}_n) $,
writing $\!\;\!\;\! \mbox{\bf w} \in L^{2}(\mathbb{R}^{n}) $
means that
$\!\;\!\;\! \mbox{w}_{i} \!\;\!\in\!\;\! L^{2}(\mathbb{R}^{n}) $
for every $ 1 \leq i \leq n $,
while $\!\;\!\;\!\mbox{\bf w} \in L^{2}_{\sigma}(\mathbb{R}^{n}) $
denotes that $ \!\;\!\;\! \mbox{\bf w} \in L^{2}(\mathbb{R}^{n}) $
and $  \mbox{div}\;\!\;\!\mbox{\bf w} = 0 $ 
in distributional sense
(i.e., $ \mbox{div}\;\!\;\!\mbox{\bf w} =\;\!0 $ 
in $ {\mathcal D}^{\;\!\prime}(\mathbb{R}^{n}) $).
\hspace{-0.050cm}$L^{p} \!\;\!$ norms of 
$\hspace{+0.020cm} \mbox{\bf w} \hspace{+0.010cm}$ 
are similarly defined; 
for example,
$ \|\hspace{+0.030cm}\mbox{\bf w}\hspace{+0.030cm}
\|_{L^{2}}^{\:\!2}  =
\|\hspace{+0.010cm}\mbox{w}_{1} \hspace{-0.020cm}
\|_{L^{2}}^{\:\!2} \hspace{-0.020cm}+ ... +
\|\hspace{+0.030cm}\mbox{w}_{\hspace{-0.010cm}n}\hspace{+0.001cm}
\|_{L^{2}}^{\:\!2} \hspace{-0.020cm}$,
$ \|\hspace{+0.020cm}D\hspace{+0.010cm}\mbox{\bf w}\hspace{+0.040cm}
\|_{L^{2}}^{\:\!2} \hspace{-0.040cm} =
\|\hspace{+0.020cm}\nabla\mbox{w}_{\hspace{-0.010cm}1}\hspace{+0.001cm}
\|_{L^{2}}^{\:\!2} \hspace{-0.020cm}+ ... +
\|\hspace{+0.020cm}\nabla\mbox{w}_{\hspace{-0.010cm}n}\hspace{+0.001cm}
\|_{L^{2}}^{\:\!2} \hspace{-0.020cm}$,
$ \|\hspace{+0.020cm}D^{2}\hspace{+0.010cm}\mbox{\bf w}\hspace{+0.040cm}
\|_{L^{2}}^{\:\!2} \hspace{-0.040cm} =
\sum_{\hspace{+0.015cm}i,\hspace{+0.010cm}j,\hspace{+0.020cm}\ell}
\hspace{-0.010cm}
\|\hspace{+0.020cm}D_{\hspace{-0.040cm}j} 
\hspace{+0.010cm} D_{\hspace{-0.030cm}\ell} \hspace{+0.045cm}
\mbox{w}_{\hspace{-0.020cm}i}\hspace{+0.020cm}
\|_{L^{2}}^{\:\!2} \hspace{-0.020cm} $,
and so on.
By $ \hspace{-0.020cm}\dot{H}^{m}(\mathbb{R}^{n})$ 
we indicate the homogeneous Sobolev 
space of order~$m$
(see e.g.\:\cite{Chemin2011}, p.\,25),
with
$ \|\hspace{+0.050cm}\mbox{\bf w}\hspace{+0.030cm}
\|_{\dot{H}^{m}(\mathbb{R}^{n})} 
\!= \|\hspace{+0.020cm} D^{m}\hspace{0.020cm} \mbox{\bf w}
\hspace{+0.030cm} \|_{L^{2}(\mathbb{R}^{n})} 
$.
$ \langle\;\!a, b \;\!\rangle $
or $ a \cdot b $
denote the standard inner product
of vectors $ a, b \in \mathbb{R}^{n} \hspace{-0.030cm}$.
For brevity we will often simply write 
$ \| \hspace{+0.030cm} \cdot \hspace{+0.030cm} \|$ instead of $\hspace{-0.010cm} 
\| \hspace{+0.030cm} \cdot \hspace{+0.030cm} 
\|_{L^{2}(\mathbb{R}^{n})} \hspace{-0.020cm}$.
We use $o(1)$ to indicate
a function that vanishes at infinity,
$ O(1) $ for a bounded function, and,
similarly:
$ O(g(t)) = O(1)\hspace{+0.030cm}g(t) $, 
$ O(g(t))^{\gamma} \hspace{-0.060cm}= 
O(1)\hspace{+0.030cm}g(t)^{\gamma} \!\;\!$,
$ o(g(t)^{\gamma} \hspace{-0.060cm}=
o(1) \hspace{+0.030cm}g(t)^{\gamma} \!\;\!$,
etc.
The notation $A(t)\lesssim B(t)$ 
means that there is a constant~$c>0$, independent on~$t$, 
such that $A(t)\le cB(t)$.

We denote by $\mathcal{S}'(\R^n)$ 
the space of tempered distributions and by 
$e^{t\Delta}$ the heat semigroup.
The integrals over the whole space will be denoted simply by $\int \!\:\!$, 
instead of $\int_{\R^n}$,
unless the explicit indication
of $ \mathbb{R}^{n} $ seems convenient. \\
%

%
%

%
%

%
%
%
%

\section{Inverse Wiegner's theorem for the Navier-Stokes equations} 
			
Given $ n \geq 2 $,
consider the\,(forced)\,Navier-Stokes equations 
in the space $ \mathbb{R}^{n} \!\:\!$,\\
\mbox{} \vspace{-0.600cm} \\
\begin{subequations}\label{NS}
\begin{equation}\label{NS1a}
\mbox{\boldmath $u$}_t \;\!+\,
\mbox{\boldmath $u$} \!\;\!\cdot\!\;\! 
\nabla \:\! \mbox{\boldmath $u$}
\,+\;\! \nabla p
\;=\;
\Delta\:\!\mbox{\boldmath $u$} 
\:+\, \mbox{\boldmath $f$}(\cdot,t), 
\end{equation}
\mbox{} \vspace{-0.900cm} \\
\begin{equation}\label{NS1b}
\nabla \!\;\!\cdot
\mbox{\boldmath $u$}\hspace{+0.010cm}(\cdot,t) \,=\: 0,
\end{equation}
\mbox{} \vspace{-0.950cm} \\
\begin{equation}\label{NS1c}
\mbox{\boldmath $u$}(\cdot,0) \,=\;
\mbox{\boldmath $u$}_0 \in L^{2}_{\sigma}(\mathbb{R}^{n}),
\end{equation}
\end{subequations}
\mbox{} \vspace{-0.150cm} \\
with $ \hspace{-0.030cm}\mbox{\boldmath $f$}(\cdot,t) 
\hspace{-0.030cm}\in\hspace{-0.030cm}
L^{1}((\hspace{+0.005cm} 0, \infty), 
L^{2}_{\sigma}(\mathbb{R}^{n})\hspace{+0.010cm}) $
satisfying,
for some constants
$ \hspace{+0.010cm} C_{\scriptstyle \hspace{-0.030cm}f}, 
K_{\hspace{-0.040cm} f}, 
\hspace{+0.020cm}\alpha > 0 $, \\
\mbox{} \vspace{-0.675cm} \\
\begin{equation}\label{eqn_2_f}
\|\;\! \mbox{\boldmath $f$}(\cdot,t) \;\!
\|_{\mbox{}_{\scriptstyle L^{2}(\mathbb{R}^{n})}}
\hspace{-0.030cm} \leq \hspace{+0.020cm} 
C_{\scriptstyle \hspace{-0.030cm}f} \;\!(1 + t)^{-\;\!\alpha \;\!-\;\! 1}
\quad \mbox{and } \quad
\|\;\! \mbox{\boldmath $f$}(\cdot,t) \;\!
\|_{\mbox{}_{\scriptstyle L^{n}(\mathbb{R}^{n})}}
\hspace{-0.030cm} \leq\hspace{+0.020cm}
K_{\hspace{-0.040cm} f} \hspace{+0.080cm}
t^{\;\!-\;\!\alpha \;\!-\;\! \frac{n\;\!+\;\!2}{4}}
\end{equation}
\mbox{} \vspace{-0.225cm} \\
for every $ t \!\;\!> 0 $.
\hspace{-0.040cm}(\hspace{+0.020cm}In the case $ n = 2 $, 
the second estimate in \eqref{eqn_2_f} becomes redundant.)
Let $ \hspace{+0.010cm} \mbox{\boldmath $u$}
\hspace{-0.010cm}= (\hspace{+0.010cm}u_{1}, u_{2},..., u_{n}) $
be a Leray solution
to the system \eqref{NS},
that is, 
a mapping
$ \hspace{+0.020cm} 
\mbox{\boldmath $u$}(\cdot,t) \in
C_{w}(\hspace{+0.010cm} 
[\hspace{+0.040cm} 0, \infty), L^{2}_{\sigma}(\mathbb{R}^{n})) 
\cap L^{2}((0,\infty), \dot{H}^{1}(\mathbb{R}^{n})) $
with 
$ \mbox{\boldmath $u$}(\cdot,0) = \mbox{\boldmath $u$}_{0} \hspace{-0.030cm}$
that satisfies
\eqref{NS} in weak sense
and, in addition,
the energy estimate {in its strong form}\\
\mbox{} \vspace{-0.650cm} \\
\begin{equation}\label{eqn_energy}
\|\;\!\mbox{\boldmath $u$}(\cdot,t) \;\!
\|_{\mbox{}_{\scriptstyle \!\;\!
	L^{2}(\mathbb{R}^{n})}}^{\:\!2}
\hspace{-0.090cm}+\hspace{+0.070cm}
2 \!\int_{\hspace{-0.005cm}s}^{\hspace{-0.010cm}t} 
\hspace{-0.070cm}
\|\hspace{+0.020cm} D \hspace{+0.010cm}
\mbox{\boldmath $u$}(\cdot,\tau) \;\!
\|_{\mbox{}_{\scriptstyle \!\;\!
	L^{2}(\mathbb{R}^{n})}}^{\:\!2}
\hspace{-0.040cm} \dd\tau
\hspace{+0.080cm}\leq\hspace{+0.080cm}
\|\;\!\mbox{\boldmath $u$}(\cdot,s) \;\!
\|_{\mbox{}_{\scriptstyle \!\;\!
	L^{2}(\mathbb{R}^{n})}}^{\:\!2}
\hspace{-0.090cm}+\hspace{+0.070cm}
2 \!\int_{\hspace{-0.005cm}s}^{\hspace{-0.010cm}t} 
\hspace{-0.040cm}
\hspace{-0.100cm} J(\tau)
\dd\tau
\end{equation}
\mbox{} \vspace{-0.100cm} \\
for $ s = 0 $ and almost all $ s > 0 $,
and every $ t > s $,
where
$ \hspace{-0.030cm} J(\tau) 
= \int_{\mathbb{R}^{n}} \!
\mbox{\boldmath $u$}(x,\tau)
\cdot \mbox{\boldmath $f$}(x,\tau) 
\hspace{+0.010cm} \dd x $.
Such solutions can be constructed at least for $2\le n\le 4$,
see e.g. \cite{Leray1934, RobinsonSadowski2016, Temam1984, Wiegner1987}.
When $n\ge5$, instead of assuming that $\boldsymbol u$ satisfies the energy estimate in its strong form~\eqref{eqn_energy}, following \cite{Wiegner1987}, we ask that $\boldsymbol u$
is a weak solution obtained as the limit in $L^2((0,\infty),\dot H^1(\R^n))$ and
in $L^2_{\rm{loc}}(\R^n\times \R^+)$, of a sequence of approximate solutions~$\boldsymbol u_k$ satisfying~\eqref{eqn_energy}.

Considering the solution
$ \hspace{+0.020cm} \mbox{\boldmath $v$}(\cdot,t) 
\in C(\hspace{+0.010cm} [\hspace{+0.010cm}0, \infty),
L^{2}_{\sigma}(\mathbb{R}^{n})) $
of the associated Stokes problem \\
\mbox{} \vspace{-0.750cm} \\
\begin{equation}\label{eqn_stokes_flow}
\mbox{\boldmath $v$}_t \,=\:
\Delta\:\!\mbox{\boldmath $v$}
\,+\, \mbox{\boldmath $f$}(\cdot,t),
\quad \;\;\;\,
\mbox{\boldmath $v$}(\cdot,0) 
\,=\;
\mbox{\boldmath $u$}_0,
\end{equation}
\mbox{} \vspace{-0.275cm} \\
where 
$ \hspace{+0.020cm} \mbox{\boldmath $u$}_0, \mbox{\boldmath $f$} $
are given in \eqref{NS}, 
our goal in this section 
is to derive the following result
(originally obtained in \cite{Skalak2014}
with $ n = 3 $, 
$ \hspace{-0.050cm}\boldsymbol f 
\hspace{-0.050cm}=\hspace{-0.020cm} {\bf 0} \hspace{+0.020cm}$ 
using a very different argument). 
	
%
%
%
%
%
	
\begin{Theorem}[{\sc \small Inverse Wiegner}]\label{Inverse_Wiegner}
If $\hspace{+0.070cm}
\|\;\!\mbox{\boldmath $u$}(\cdot,t)\;\!
\|_{L^{2}(\mathbb{R}^{n})}
\hspace{-0.050cm} =\hspace{+0.020cm}
O(1 +\hspace{+0.020cm}t)^{-\;\!\alpha} \hspace{-0.020cm} $
for some $\;\! 0 < \alpha \leq $ $(n+2)/4 $,
then 
$\hspace{+0.030cm}
\|\;\!\mbox{\boldmath $v$}(\cdot,t)\;\!
\|_{L^{2}(\mathbb{R}^{n})}
\hspace{-0.050cm}=\hspace{+0.020cm} 
O(1 + \hspace{+0.020cm}t)^{-\;\!\alpha}
\hspace{-0.030cm}$
as well,
\hspace{-0.030cm}provided that $\eqref{eqn_2_f}\hspace{-0.040cm}$ 
is satisfied.
\end{Theorem}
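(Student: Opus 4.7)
The plan is to reduce the theorem to an $L^{2}$-bound on the nonlinear remainder $\boldsymbol{w}(t):=\boldsymbol{u}(t)-\boldsymbol{v}(t)$, and to obtain it by applying Schonbek-type Fourier splitting directly to the Stokes flow~$\boldsymbol{v}$. Because $\boldsymbol{f}\in L^{2}_{\sigma}(\R^{n})$, the force and the pressure gradient both cancel in the equation for $\boldsymbol{w}$, which becomes
\[
\boldsymbol{w}_{t}-\Delta\boldsymbol{w}=-\mathbb{P}\operatorname{div}(\boldsymbol{u}\otimes\boldsymbol{u}),\qquad \boldsymbol{w}(0)=\mathbf{0},
\]
with $\mathbb{P}$ the Leray projector. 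By the triangle inequality $\|\boldsymbol{v}\|\le\|\boldsymbol{u}\|+\|\boldsymbol{w}\|$, it is enough to show $\|\boldsymbol{w}(t)\|\lesssim (1+t)^{-\alpha}$. The only input I will need about the equation for $\boldsymbol{w}$ is the crude \emph{low-frequency} bound that follows from Duhamel and from $|\widehat{\boldsymbol{u}\otimes\boldsymbol{u}}(\xi,s)|\le \|\boldsymbol{u}(s)\|^{2}\lesssim (1+s)^{-2\alpha}$:
\[
|\widehat{\boldsymbol{w}}(\xi,t)|\,\lesssim\,|\xi|\,\Phi_{\alpha}(t),\qquad \Phi_{\alpha}(t):=\int_{0}^{t}(1+s)^{-2\alpha}\,\dd s,
\]
where $\Phi_{\alpha}$ is bounded when $2\alpha>1$, of order $\log(1+t)$ when $2\alpha=1$, and of order $(1+t)^{1-2\alpha}$ when $2\alpha<1$.

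Next I would plug this into the Stokes energy identity $\frac{d}{dt}\|\boldsymbol{v}\|^{2}+2\|\nabla\boldsymbol{v}\|^{2}=2\langle\boldsymbol{f},\boldsymbol{v}\rangle$ combined with the elementary Fourier-splitting inequality $\|\nabla\boldsymbol{v}\|^{2}\ge R^{2}\bigl(\|\boldsymbol{v}\|^{2}-\int_{|\xi|<R}|\widehat{\boldsymbol{v}}|^{2}\dd\xi\bigr)$. The decisive step is the low-frequency control of $\widehat{\boldsymbol{v}}=\widehat{\boldsymbol{u}}-\widehat{\boldsymbol{w}}$: the hypothesis on $\boldsymbol{u}$ and the estimate above yield
\[
\int_{|\xi|<R}|\widehat{\boldsymbol{v}}|^{2}\,\dd\xi \,\lesssim\, (1+t)^{-2\alpha}+R^{n+2}\Phi_{\alpha}(t)^{2},
\]
so that
\[
\frac{d}{dt}\|\boldsymbol{v}\|^{2}+2R^{2}\|\boldsymbol{v}\|^{2}\,\lesssim\, R^{2}(1+t)^{-2\alpha}+R^{n+4}\Phi_{\alpha}(t)^{2}+\|\boldsymbol{f}\|\,\|\boldsymbol{v}\|.
\]

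To close the argument I would set $R^{2}(t)=A/(1+t)$ with $A$ sufficiently large, absorb $\|\boldsymbol{f}\|\|\boldsymbol{v}\|$ by Young's inequality using $\|\boldsymbol{f}\|\lesssim (1+t)^{-\alpha-1}$, multiply by the integrating factor $(1+t)^{2A}$, and integrate. This produces $\|\boldsymbol{v}(t)\|^{2}\lesssim (1+t)^{-2\alpha}$ whenever each source term on the right is dominated by $(1+t)^{-2\alpha-1}$. The only term that is genuinely tight is $R^{n+4}\Phi_{\alpha}^{2}$: when $2\alpha>1$ this is $\asymp (1+t)^{-(n+4)/2}$, which forces exactly $(n+4)/2\ge 2\alpha+1$, i.e.\ $\alpha\le (n+2)/4$; when $2\alpha\le 1$ the growth of $\Phi_{\alpha}$ is slow enough, and the hypothesis $n\ge 2$ is what makes the balance close. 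The hard part will therefore be the exponent bookkeeping at the critical threshold $\alpha=(n+2)/4$, where the Schonbek weight and the low-frequency correction match up to constants. The conceptual gain over~\cite{Skalak2014} is that only low-frequency control of $\widehat{\boldsymbol{w}}$ is required, an immediate consequence of Duhamel that keeps the argument uniform in $n\ge 2$ and handles the external force at no additional cost.
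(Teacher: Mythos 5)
Your proposal is correct, and it takes a genuinely different route from the paper. The paper's own proof for $n\ge 3$ is a bootstrap on the \emph{direct} Wiegner theorem: applying the difference estimate \eqref{eqn_Wiegner_estimate} with $\beta=0$ gives $\|u-v\|=o\,(1+t)^{-(n-2)/4}$, hence a first decay rate for $v$, which is fed back into \eqref{eqn_Wiegner_estimate} with larger and larger $\beta$ until $\|u-v\|=O(1+t)^{-(n+2)/4}$; since this starting estimate is only $o(1)$ when $n=2$, the paper must treat $n=2$ by a separate Duhamel/heat-kernel argument on $\theta=u-v$, which requires first deriving the gradient bound \eqref{eqn_decay_gradient}. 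You instead run a single Schonbek-type Fourier-splitting argument on the Stokes energy identity, controlling the low frequencies of $\widehat v=\widehat u-\widehat w$ by the hypothesis on $\|u\|$ together with the crude Duhamel bound $|\widehat w(\xi,t)|\lesssim|\xi|\Phi_\alpha(t)$. I checked your exponent bookkeeping and it closes in all three regimes: for $2\alpha<1$ the requirement is $n\ge 2-4\alpha$, and at the endpoint $\alpha=(n+2)/4$ the tight term $R^{n+4}\Phi_\alpha^2\asymp(1+t)^{-(n+4)/2}=(1+t)^{-2\alpha-1}$ integrates against the weight $(1+t)^{A}$ to exactly the right power with no logarithmic loss, so your worry about the critical threshold is unfounded. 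What each approach buys: yours is uniform in $n\ge2$ (no case distinction), handles the force at no extra cost, and does not invoke Wiegner's theorem at all, which is conceptually cleaner for an ``inverse'' statement; the paper's buys extreme brevity for $n\ge3$ by using Wiegner as a black box, pushing all Fourier-side work into the cited proof.

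Two points you should make explicit in a final write-up. First, the Duhamel/pointwise Fourier representation of $\widehat w$ must be justified for Leray (weak) solutions: this is standard (test the weak formulation against $e^{(t-s)\Delta}$ applied to smooth divergence-free fields; for $n\ge5$, where the paper defines Leray solutions as limits of approximations $u_k$ satisfying \eqref{eqn_energy}, run the argument on the $u_k$ and pass to the limit), and the paper itself relies on the same mild formulation in its $n=2$ case, so this is a technicality rather than a gap. Second, your opening reduction via the triangle inequality $\|v\|\le\|u\|+\|w\|$ is never actually used: your argument estimates $\|v\|$ directly from the splitting inequality, using only the low-frequency bound on $\widehat w$ rather than any $L^2$ bound on $w$; delete that sentence to avoid confusion.
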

%
%

This result is related to the celebrated 
{\small \sc Wiegner's theorem}\;\cite{Wiegner1987},
recalled next. 
%
%
%
%
%
%
\begin{Theorem}[{\sc \small M.\,Wiegner,\,1987}]\label{Wiegner_thm}
\!Let
$\hspace{+0.030cm} \mbox{\boldmath $f$}(\cdot,t) 
\in L^{1}(\hspace{+0.010cm}
(\hspace{+0.020cm} 0, \infty), L^{2}_{\sigma}(\mathbb{R}^{n})
\hspace{+0.020cm}) \hspace{-0.020cm}$
satisfy $\hspace{+0.020cm}\eqref{eqn_2_f} \hspace{-0.035cm}$ 
for some  
$\hspace{+0.030cm} 0 \leq \alpha \leq (n+2)/4 $.
\!If
$\hspace{+0.020cm}\|\;\!\mbox{\boldmath $v$}(\cdot,t)\;\!
\|_{\scriptstyle L^{2}(\mathbb{R}^{n})}
\!\!\;\!\;\!=\:\! O(1 + \hspace{+0.020cm} t)^{-\;\!\beta} $
for some $\:\! 0 \leq \beta \leq \alpha $,
then
we will also have
$\hspace{+0.030cm}\|\hspace{+0.020cm}\mbox{\boldmath $u$}(\cdot,t)\;\!
\|_{\scriptstyle L^{2}(\mathbb{R}^{n})}
\hspace{-0.050cm}=\hspace{+0.020cm} 
O(1 + \hspace{+0.020cm}t)^{-\;\!\beta} \hspace{-0.040cm} $
and,
in addition\/\mbox{\em :} \\
\mbox{} \vspace{-0.700cm} \\
\begin{equation}\label{eqn_Wiegner_estimate}
\|\hspace{+0.035cm} \mbox{\boldmath $u$}(\cdot,t) 
-\hspace{+0.027cm}
\mbox{\boldmath $v$}(\cdot,t) \hspace{+0.030cm}
\|_{\mbox{}_{\scriptstyle \!\;\!L^{2}(\mathbb{R}^{n})}}
\hspace{-0.040cm}=\hspace{+0.050cm}
\left\{\,
\begin{array}{ll}
\!o\hspace{+0.030cm}
(1 + \hspace{+0.020cm} t)^{-\;\!(n\:\!-\:\!2)/4} & 
\mbox{ if \;} \beta = 0 \\
\mbox{} \vspace{-0.250cm} \\
\!O\hspace{+0.010cm}
(1 + \hspace{+0.020cm} t)^{-\;\!2\;\!\beta\,-\;\!(n\:\!-\:\!2)/4} & 
\mbox{ if \;} 0 < \beta < \frac{1}{2} \\
\mbox{} \vspace{-0.250cm} \\
\!O\hspace{+0.010cm}
(1 + \hspace{+0.020cm}t)^{-\;\!(n\:\!+\:\!2)/4} \;\!
\log\;\!(2 + \hspace{+0.010cm}t) & 
\mbox{ if \;} \beta = \frac{1}{2} \\
\mbox{} \vspace{-0.250cm} \\
\!O\hspace{+0.010cm}
(1 + \hspace{+0.020cm}t)^{-\;\!(n\:\!+\:\!2)/4} & 
\mbox{ if \;} \frac{1}{2} < \beta \leq \frac{n\:\!+\:\!2}{4} 
\end{array}
\right.
\end{equation}

\end{Theorem}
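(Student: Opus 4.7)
My plan is to follow the Fourier-splitting method of Schönbek combined with a bootstrap iteration, and then to exploit the mild formulation directly for the sharp difference estimate \eqref{eqn_Wiegner_estimate}. The two main ingredients are the strong energy inequality \eqref{eqn_energy} and the Duhamel representation of the remainder $\boldsymbol w := \boldsymbol u - \boldsymbol v$,
\begin{equation*}
\boldsymbol w(t) = -\int_0^t e^{(t-s)\Delta}\,\mathbb{P}\,\nabla\cdot(\boldsymbol u\otimes\boldsymbol u)(s)\,ds,
\end{equation*}
whose Fourier transform satisfies the uniform bound $|\widehat{\boldsymbol w}(\xi,t)|\le |\xi|\int_0^t\|\boldsymbol u(s)\|_{L^2}^2\,ds$, obtained from $|\widehat{\boldsymbol u\otimes\boldsymbol u}|\le \|\boldsymbol u\|_{L^2}^2$ and the boundedness of $\mathbb{P}$ on $L^2$.

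The first step would be to derive a differential inequality for $\|\boldsymbol u(t)\|^2$ via Fourier splitting. Choosing a slowly decaying threshold $g(t)^2 = k/(1+t)$, I bound the dissipation from below by $g(t)^2\bigl(\|\boldsymbol u\|^2 - \int_{|\xi|<g(t)}|\widehat{\boldsymbol u}|^2\,d\xi\bigr)$ and control the low-frequency piece by
\begin{equation*}
\int_{|\xi|<g(t)}|\widehat{\boldsymbol u}(\xi,t)|^2\,d\xi \lesssim \|\boldsymbol v(t)\|^2 + g(t)^{n+2}\Bigl(\int_0^t\|\boldsymbol u(s)\|_{L^2}^2\,ds\Bigr)^2,
\end{equation*}
using $|\widehat{\boldsymbol u}|\le |\widehat{\boldsymbol v}|+|\widehat{\boldsymbol w}|$ and the volume factor $|B_{g(t)}|\simeq g(t)^n$. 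Multiplying the resulting inequality by the integrating factor $(1+t)^{2k}$ and integrating yields an explicit algebraic decay estimate for $\|\boldsymbol u(t)\|^2$, depending on the assumed decay of $\|\boldsymbol v\|$, on the forcing bound \eqref{eqn_2_f}, and on the quantity $G(t) := \int_0^t\|\boldsymbol u(s)\|_{L^2}^2\,ds$.

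The second step is a bootstrap. Starting from the crude bound $\|\boldsymbol u(t)\|\lesssim 1$ (obtained from \eqref{eqn_energy} and the $L^1$-in-time assumption on $\boldsymbol f$), each pass through the previous step improves the decay of $\|\boldsymbol u\|$, which in turn sharpens the bound on $G(t)$ and feeds the next iteration. After finitely many passes the scheme stabilizes at the sharp rate $\|\boldsymbol u(t)\|^2 = O((1+t)^{-2\beta})$, giving the first assertion. The sharp difference estimate \eqref{eqn_Wiegner_estimate} is then derived by revisiting the Duhamel integral for $\boldsymbol w$ with the improved decay of $\boldsymbol u$ now in hand: split the time integration at a carefully chosen scale, apply $\|e^{(t-s)\Delta}\mathbb{P}\nabla\cdot F\|_{L^2}\lesssim (t-s)^{-1/2-n/4}\|F\|_{L^1}$ with $\|\boldsymbol u\otimes\boldsymbol u\|_{L^1}\le \|\boldsymbol u\|_{L^2}^2$ away from $s=t$, and use an energy-type estimate near $s=t$. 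The four regimes in \eqref{eqn_Wiegner_estimate} correspond to the three possible behaviours of $G(t)$ as $t\to\infty$ (bounded, logarithmic, polynomial), the log factor appearing precisely at the borderline $\beta=1/2$.

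The main obstacle is the bookkeeping in the bootstrap: I must check that the iteration saturates in a bounded number of steps and that the exponents line up to produce the ceiling rate $(n+2)/4$. The critical case $\beta=1/2$ is the most delicate, requiring a separate logarithmic analysis to produce the $\log(2+t)$ factor in \eqref{eqn_Wiegner_estimate}. Incorporating the forcing in the optimal way also requires the mixed-norm hypothesis \eqref{eqn_2_f} (the $L^n$ part being used only for $n\ge 3$, via Sobolev interpolation with the dissipation), and this interplay must be carried consistently through every iteration.
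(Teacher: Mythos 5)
Before anything else, note that the paper does \emph{not} prove Theorem~\ref{Wiegner_thm}: its ``proof'' is a citation to Wiegner's original article (pp.~305--311), with pointers to alternative proofs for $n\le 3$. So your attempt has to be measured against the argument those references actually carry out, which is an energy/Fourier-splitting argument applied to the difference $\boldsymbol w=\boldsymbol u-\boldsymbol v$ itself, not to $\boldsymbol u$. Your toolkit (Fourier splitting, the pointwise bound $|\widehat{\boldsymbol w}(\xi,t)|\le|\xi|\int_0^t\|\boldsymbol u(s)\|^2\dd s$, a bootstrap) is the right family of techniques, but two steps of your plan genuinely fail. First, the bootstrap stalls completely when $n=2$. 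Your low-frequency bound gives $\int_{|\xi|<g(t)}|\widehat{\boldsymbol w}|^2\dd\xi\lesssim g(t)^{n+2}G(t)^2$, and on the first pass the only available information is $G(t)\lesssim t$; with $g(t)^2=k/(1+t)$ and $n=2$ this term is $g^4G^2\approx k^2$, a \emph{constant}, so the differential inequality returns $\|\boldsymbol u(t)\|^2\lesssim 1$ and the iteration never starts. (For $n\ge3$ your scheme $2\gamma'=\min\{2\beta,\,(n-2)/2+4\gamma\}$ does saturate at $2\beta$ in finitely many steps, but at $n=2$ it provably does not move from zero; this is exactly the classical obstruction that made $2$D $L^2$-decay a separate, harder theorem of Kajikiya--Miyakawa, and it is why Wiegner's lemma keeps the factor $e^{-(t-s)|\xi|^2}$ inside the Duhamel--Fourier integral rather than bounding it by $1$.) Relatedly, the case $\beta=0$ of \eqref{eqn_Wiegner_estimate} asserts a little-$o$ conclusion, which no $O$-based iteration can produce; it needs Masuda-type non-uniform decay $\|\boldsymbol u(t)\|\to0$ as an extra input carried through the argument.

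The second gap is the near-field part of your Duhamel estimate for $\boldsymbol w$. On $[t/2,t]$ the kernel bound $(t-s)^{-1/2-n/4}\|\boldsymbol u\otimes\boldsymbol u\|_{L^1}$ is not integrable (since $1/2+n/4\ge1$ for $n\ge2$), and the ``energy-type estimate near $s=t$'' you invoke is not available for Leray solutions when $n\ge3$: one has no pointwise-in-time control of $\|\nabla\boldsymbol u(s)\|$ or $\|\boldsymbol u(s)\|_{L^4}$, only $\int\|\nabla\boldsymbol u\|^2\dd\tau<\infty$, and a Gagliardo--Nirenberg/H\"older bookkeeping shows that for every intermediate exponent $q\in[1,2]$ the resulting time singularity fails to be integrable against this information precisely when $n\ge2$. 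It is no accident that the paper itself uses your Duhamel splitting only in the $n=2$ part of the proof of Theorem~\ref{Inverse_Wiegner}, where the pointwise decay $\|D\boldsymbol u(s)\|=O(s^{-\alpha-1/2})$ of \eqref{eqn_decay_gradient} is available because $2$D Leray solutions are strong. Wiegner's actual proof avoids the near-field issue altogether: it derives the energy inequality for $\boldsymbol w$, rewrites the nonlinear term as $(\boldsymbol u\cdot\nabla\boldsymbol w,\boldsymbol v)\le\|\boldsymbol u\|\,\|\nabla\boldsymbol w\|\,\|\boldsymbol v\|_{L^\infty}$ with $\|\boldsymbol v(t)\|_{L^\infty}\lesssim t^{-n/4}$ (this is where the $L^n$ hypothesis in \eqref{eqn_2_f} enters, not through ``Sobolev interpolation with the dissipation''), and then Fourier-splits the dissipation of $\boldsymbol w$; the Duhamel formula is used only for the pointwise low-frequency bound, where $e^{-(t-s)|\xi|^2}\le1$ causes no trouble. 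Patching your $[t/2,t]$ integral with the energy inequality for $\boldsymbol w$ on that interval is circular, since $\|\boldsymbol w(t/2)\|$ is the quantity being estimated. Finally, for $n\ge5$ the paper's solutions are only defined as limits of approximations satisfying \eqref{eqn_energy}, so any such argument must be run on the approximants and passed to the limit, a step your plan does not address.
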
	
\mbox{} \vspace{-0.700cm} \\
%
%
\begin{proof}[Proof of Theorem\:\ref{Wiegner_thm}]
See\;\cite{Wiegner1987}, pp.\:305\;\!-\:\!311. 
\hspace{-0.030cm}For an alternative proof
when $ n \hspace{-0.020cm} \leq 3 $,
see \cite{KreissHagstromLorenzZingano2003, Zhou2007}. 
\end{proof}
%
%
\mbox{} \vspace{-1.100cm} \\
%
%
\begin{proof}[Proof of Theorem\:\ref{Inverse_Wiegner}]
We begin by considering
the case of dimension $ n \geq 3 $.
%
%
%
%
Let
$ \mbox{\boldmath $\theta$}\hspace{+0.010cm}(\cdot,t) 
\hspace{-0.030cm}= $
$ \mbox{\boldmath $u$}\hspace{+0.005cm}(\cdot,t) 
\hspace{-0.010cm}-\hspace{+0.020cm} 
\mbox{\boldmath $v$}\hspace{+0.007cm}(\cdot,t) $
and let\hspace{+0.020cm}
$ \| \hspace{+0.030cm} \cdot \hspace{+0.030cm} \| $
stand for
$ \| \hspace{+0.030cm} \cdot \hspace{+0.030cm} 
\|_{L^{2}(\mathbb{R}^{n})} \hspace{-0.020cm}$.
Applying~\eqref{eqn_Wiegner_estimate} 
of {\small \sc Theorem\;\ref{Wiegner_thm}} 
with 
$ \beta = 0 $,
we obtain that
$ \|\hspace{+0.040cm}\mbox{\boldmath $\theta$}
\hspace{+0.010cm}(\cdot,t) \hspace{+0.030cm}\| 
\hspace{-0.020cm}=\hspace{+0.020cm} 
o\hspace{+0.030cm}
(1 + \hspace{+0.015cm}t)^{-\hspace{+0.050cm}
(n\;\!-\;\!2)\hspace{+0.010cm}/\hspace{+0.010cm}4} \hspace{-0.040cm}$,
which gives the result 
if $\hspace{+0.020cm} \alpha \leq (n-2)/4 $.
For larger $ \alpha $,
this already gives us
$ \|\hspace{+0.030cm}\mbox{\boldmath $v$}(\cdot,t) \hspace{+0.030cm}\| 
\hspace{-0.020cm}=\hspace{+0.020cm} 
O\hspace{+0.010cm}
(1 + \hspace{+0.015cm}t)^{-\hspace{+0.050cm}(n\;\!-\;\!2)
\hspace{+0.010cm}/\hspace{+0.010cm}4} \hspace{-0.040cm}$,
and we proceed as follows.
If $ n > 4 $, \eqref{eqn_Wiegner_estimate} with $ \beta = (n - 2)/4 $
gives 
$ \|\hspace{+0.030cm}\mbox{\boldmath $\theta$}(\cdot,t) \hspace{+0.030cm}\| 
\hspace{-0.020cm}=\hspace{+0.020cm} 
O\hspace{+0.010cm}(1 + \hspace{+0.015cm}t)^{-\;\!(n\:\!+\:\!2)/4} \hspace{-0.040cm}$,
and so we are done. \linebreak
If $ n \hspace{-0.030cm}=\hspace{-0.030cm} 4 $,
\eqref{eqn_Wiegner_estimate} gives
$ \|\hspace{+0.030cm}\mbox{\boldmath $\theta$}(\cdot,t) \hspace{+0.030cm}\| 
\hspace{-0.020cm}=\hspace{+0.020cm} 
O(1 + \hspace{+0.015cm}t\hspace{+0.010cm})^{-\;\!(n\:\!+\:\!2)/4} 
\hspace{+0.030cm}\log \hspace{+0.060cm}(2 + t) 
\hspace{-0.020cm}=\hspace{+0.020cm} 
O(1 + \hspace{+0.015cm}t\hspace{+0.010cm})^{-\;\!1} 
\hspace{-0.030cm} $, \linebreak
so that
$ \|\hspace{+0.030cm}\mbox{\boldmath $v$}(\cdot,t) \hspace{+0.030cm}\| 
\hspace{-0.020cm}=\hspace{+0.020cm} 
O(1 + \hspace{+0.015cm}t)^{-\;\!\gamma} \hspace{-0.020cm} $
with
$ \gamma = \min \hspace{+0.050cm}
\{\hspace{+0.005cm}1, \hspace{+0.030cm} 
\alpha \hspace{+0.025cm} \} > 1/2 $.
This then \linebreak
gives, 
using \eqref{eqn_Wiegner_estimate} again, that 
$ \|\hspace{+0.030cm}\mbox{\boldmath $\theta$}(\cdot,t) \hspace{+0.030cm}\| 
\hspace{-0.020cm}=\hspace{+0.020cm} 
O(1 + \hspace{+0.015cm}t)^{-\;\!(n\:\!+\:\!2)/4} \hspace{-0.020cm}$,
which implies
$ \|\hspace{+0.030cm}\mbox{\boldmath $v$}(\cdot,t) \hspace{+0.030cm}\| 
\hspace{-0.020cm}=\hspace{+0.020cm} 
O(1 + \hspace{+0.015cm}t)^{-\;\!\alpha} \hspace{-0.040cm}$,
as claimed.
Finally,
in the remaining case
$\hspace{+0.015cm} n \hspace{-0.020cm}= 3 $,
$ \|\hspace{+0.030cm}\mbox{\boldmath $v$}(\cdot,t) \hspace{+0.030cm}\| 
\hspace{-0.020cm}=\hspace{+0.020cm} 
O(1 + \hspace{+0.015cm}t)^{-\;\!(n\:\!-\:\!2)/4} 
\hspace{-0.020cm}=\hspace{+0.020cm}
O(1 + \hspace{+0.015cm}t)^{-\;\!1/4} $
gives, applying \eqref{eqn_Wiegner_estimate},
$ \|\hspace{+0.030cm}\mbox{\boldmath $\theta$}(\cdot,t) \hspace{+0.030cm}\| 
\hspace{-0.020cm}=\hspace{+0.020cm} 
O(1 + \hspace{+0.015cm}t)^{-\;\!3/4} $,
so that 
$ \|\hspace{+0.030cm}\mbox{\boldmath $v$}(\cdot,t) \hspace{+0.030cm}\| 
\hspace{-0.020cm}=\hspace{+0.020cm} 
O(1 + \hspace{+0.015cm}t)^{-\;\!\gamma} $
where 
$ \gamma = \min\;\!\{\hspace{+0.010cm} \alpha, 3/4 \hspace{+0.010cm}\}$.
\linebreak
If $ \alpha \leq 3/4 $, we are done;
otherwise, 
$ \|\hspace{+0.030cm}\mbox{\boldmath $v$}(\cdot,t) \hspace{+0.030cm}\| 
\hspace{-0.020cm}=\hspace{+0.020cm} 
O(1 + \hspace{+0.015cm}t)^{-\;\!3/4} \hspace{-0.020cm}$
and so, by \eqref{eqn_Wiegner_estimate} again, 
we have
$ \|\hspace{+0.030cm}\mbox{\boldmath $\theta$}(\cdot,t) \hspace{+0.030cm}\| 
\hspace{-0.030cm}=\hspace{+0.020cm} 
O(1 + \hspace{+0.015cm}t)^{-\;\!(n\:\!+\:\!2)/4} \hspace{-0.050cm} $,
giving that
$ \|\hspace{+0.030cm}\mbox{\boldmath $v$}(\cdot,t) \hspace{+0.030cm}\| 
\hspace{-0.030cm}=\hspace{+0.020cm} 
O(1 + \hspace{+0.015cm}t)^{-\;\!\alpha} \hspace{-0.030cm} $,
as claimed. 
This completes the proof of
{\small \sc Theorem\;\ref{Inverse_Wiegner}}
for any $\hspace{+0.010cm} n \!\;\!\geq 3 $,
as a
direct consequence of \eqref{eqn_Wiegner_estimate} above.
\mbox{} \vspace{-0.450cm} \\
%
%
%
%

In the remaining case $ n = 2 $,
letting once more
$ \mbox{\boldmath $\theta$}(\cdot,t) 
\hspace{-0.030cm}= 
\mbox{\boldmath $u$}(\cdot,t) 
\hspace{-0.010cm}-\hspace{+0.020cm} 
\mbox{\boldmath $v$}(\cdot,t) $,
we have
$ \theta(\cdot,t) \in 
C([\hspace{+0.030cm}0, \infty), L^{2}(\mathbb{R}^{2})) $
and \\
\mbox{} \vspace{-0.700cm} \\
\begin{equation}\label{eqn_theta}
\mbox{\boldmath $\theta$}_t \hspace{+0.030cm}=\, 
\Delta \hspace{+0.020cm} \mbox{\boldmath $\theta$}
\;\!-\;\!
\mathbb{P}_{\!\:\!H} \:\![\,
\mbox{\boldmath $u$}\hspace{+0.010cm}(\cdot,t) \cdot 
\nabla \mbox{\boldmath $u$}\hspace{+0.010cm}(\cdot,t) \,],
\quad \;\;\,
\mbox{\boldmath $\theta$}(\cdot,0) = \mbox{\bf 0},
\end{equation}
\mbox{} \vspace{-0.250cm} \\
where $ \;\!\mathbb{P}_{\!\;\!H} \!: 
L^{2}(\mathbb{R}^{2})
\hspace{-0.030cm}\rightarrow\hspace{-0.030cm} 
L^{2}_{\sigma}(\mathbb{R}^{2}) \hspace{+0.020cm}$
denotes the\hspace{+0.030cm} 
{\small \sc  Leray-Helmholtz projector}
(see e.g.\;\cite{RobinsonSadowski2016},\; Chapter 2).  
Our assumption in this case is that,
for some $ 0 < \alpha \leq (n + 2)/4 = 1 $, \\
\mbox{} \vspace{-0.700cm} \\
\begin{equation}\label{eqn_decay_u_f}
\|\;\!\mbox{\boldmath $u$}(\cdot,t) 
\hspace{+0.030cm} \| 
\,=\:
O\hspace{+0.010cm}(\hspace{+0.010cm} 1 + \hspace{+0.020cm}t
\hspace{+0.010cm})^{-\;\!\alpha}
\quad \mbox{and } \quad
\|\;\!\mbox{\boldmath $f$}(\cdot,t) 
\hspace{+0.030cm} \| 
\,=\:
O\hspace{+0.010cm}(\hspace{+0.010cm} 1 + \hspace{+0.020cm}t
\hspace{+0.010cm})^{-\;\!\alpha\;\!-\:\!1}
\end{equation}
\mbox{} \vspace{-0.300cm} \\
for all $\hspace{+0.020cm} t \!\;\!> 0 $,
where\hspace{+0.020cm}
$ \| \hspace{+0.030cm} \cdot \hspace{+0.030cm} \| $
denotes 
the norm
$ \| \hspace{+0.030cm} \cdot \hspace{+0.030cm} 
\|_{L^{2}(\mathbb{R}^{2})} \hspace{-0.020cm}$.
Now, \eqref{eqn_decay_u_f} gives \\
\mbox{} \vspace{-0.700cm} \\
\begin{equation}\label{eqn_decay_gradient} 
\|\hspace{+0.030cm}D\hspace{+0.010cm}
\mbox{\boldmath $u$}(\cdot,t) \hspace{+0.030cm} \| 
=\hspace{+0.030cm}
O\hspace{+0.010cm}(\hspace{+0.010cm} 1 + \hspace{+0.020cm}t
\hspace{+0.015cm})^{-\;\!\alpha} \hspace{+0.080cm}
t^{\:\!-\;\!1/2} 
\end{equation}
\mbox{} \vspace{-0.300cm} \\
(see \cite{GuterresNichePerusatoZingano2021}
for a broader discussion). 
%
%
In fact, 
recalling the energy identity \\
\mbox{} \vspace{-0.600cm} \\
\begin{equation}
\notag
\frac{\dd}{\dd t} \, 
\|\hspace{+0.020cm} D\hspace{+0.010cm}
\mbox{\boldmath $u$}(\cdot,t)\hspace{+0.020cm} \|^{\:\!2}
+\;\!
2 \;\!\;\!
\|\hspace{+0.020cm} D^{2}\hspace{+0.010cm}
\mbox{\boldmath $u$}(\cdot,t)\hspace{+0.020cm} \|^{\:\!2}
\hspace{+0.020cm}=\hspace{+0.090cm} 
2 \hspace{-0.050cm}
\int_{\mbox{}_{\scriptstyle \!\;\!\mathbb{R}^{2}}} \!\!
\langle \,\nabla \mbox{\footnotesize $\wedge$}\;\!(\hspace{+0.020cm}
\nabla \mbox{\footnotesize $\wedge$} \;\!\mbox{\boldmath $u$}), 
\hspace{+0.020cm} \mbox{\boldmath $f$}(x,t) \, \rangle
\dd x
\end{equation}
\mbox{} \vspace{-0.175cm} \\
(for almost all
$\!\;\!\;\!t > 0 $),
we get \\
\mbox{} \vspace{-0.650cm} \\
\begin{equation}
\notag
\frac{\dd}{\dd t} \, 
\|\hspace{+0.020cm} D\hspace{+0.010cm}
\mbox{\boldmath $u$}(\cdot,t)\hspace{+0.020cm} \|^{\:\!2}
\hspace{+0.030cm}\leq\hspace{+0.070cm}
\mbox{\small $ {\displaystyle \frac{1}{2} }$} 
\;
\|\;\!\mbox{\boldmath $f$}(\cdot,t)\hspace{+0.020cm} \|^{\:\!2}
\end{equation}
\mbox{} \vspace{-0.200cm} \\
for a.e.\;$t > 0 $.
By \eqref{eqn_decay_u_f} above,
we have, 
for some constant 
$\hspace{+0.020cm} 
C_{\hspace{-0.030cm}f} \hspace{-0.050cm}> 0 \hspace{+0.010cm}$,
that
$ {\displaystyle
\|\;\!\mbox{\boldmath $f$}(\cdot,t) 
\hspace{+0.030cm} \| 
\hspace{+0.010cm}\leq\hspace{+0.020cm}
C_{\hspace{-0.030cm}f} \hspace{+0.025cm}
(\hspace{+0.010cm} 1 + \hspace{+0.020cm}t
\hspace{+0.010cm})^{-\;\!\alpha\;\!-\:\!1}
} $
\hspace{-0.030cm}(cf.\;(\ref{eqn_2_f})), 
and so the the function
$ \!\;\!z_{1} \hspace{-0.030cm}\in\!\;\! C^{0}(0, \infty) $
\!\;\!given by \\
\mbox{} \vspace{-0.575cm} \\
\begin{equation}
\notag
z_{1}(t) :=\;
\|\hspace{+0.020cm} D\hspace{+0.010cm}
\mbox{\boldmath $u$}(\cdot,t)\hspace{+0.020cm} \|^{\:\!2}
\hspace{+0.030cm}+\hspace{+0.090cm}
\mbox{\small $ {\displaystyle
      \frac{1}{\;\!4\hspace{+0.020cm} \alpha + 2\;\!} }$} \,
C_{\!f}^{\;\!2} \;\!
(1 + t\hspace{+0.010cm})^{-\;\!2\;\!\alpha \;\!-\:\!1}
\end{equation}
\mbox{} \vspace{-0.175cm} \\
is monotonically decreasing in the interval
$(\hspace{+0.010cm} 0, \infty) $.
Because \\
\mbox{} \vspace{-0.600cm} \\
\begin{equation}
\notag
\|\;\!\mbox{\boldmath $u$}(\cdot, \:\!t)\hspace{+0.020cm} \|^{\:\!2}
\hspace{+0.040cm}+\hspace{+0.070cm}
2 \! \int_{\mbox{}_{\scriptstyle \!\:\!t/2}}^{t}
\hspace{-0.070cm}
\|\hspace{+0.020cm} D\hspace{+0.010cm}
\mbox{\boldmath $u$}(\cdot, \tau)\hspace{+0.020cm} \|^{\:\!2}
\dd\tau
\;=\;
\|\;\!\mbox{\boldmath $u$}(\cdot, \:\!t/2)\hspace{+0.020cm} \|^{\:\!2}
\end{equation}
\mbox{} \vspace{-0.100cm} \\
(for all $ t > 0 $),
we then have \\
\mbox{} \vspace{-0.200cm} \\
\mbox{} \hspace{+0.750cm} 
$ {\displaystyle
t\; \|\hspace{+0.020cm} D\hspace{+0.010cm}
\mbox{\boldmath $u$}(\cdot, \:\!t)\hspace{+0.020cm} \|^{\:\!2}
\hspace{+0.010cm}\leq\:
t \;\! z_{1}(t) 
\,\leq\;
2 \!
\int_{\mbox{}_{\scriptstyle \!\:\! t/2}}^{t}
\hspace{-0.150cm}
z_{1}(\tau) \dd \tau
} $ \\
\mbox{} \vspace{+0.150cm} \\
\mbox{} \hspace{+4.250cm}
$ {\displaystyle
=\;\:\! 2 \!
\int_{\mbox{}_{\scriptstyle \!\:\! t/2}}^{t}
\hspace{-0.100cm} 
\|\hspace{+0.020cm} D\hspace{+0.010cm}
\mbox{\boldmath $u$}(\cdot, \tau)\hspace{+0.020cm} \|^{\:\!2}
\dd\tau
\,+\,
\mbox{\small ${\displaystyle 
      \frac{C_{\!f}^{\;\!2}}{\;\!2\:\!\alpha + 1\;\!} }$}
\!
\int_{\mbox{}_{\scriptstyle \hspace{-0.050cm} t/2}}^{t}
\hspace{-0.100cm}
(1 + \tau)^{-\;\!2\:\!\alpha\;\!-\;\!1}
\dd\tau
} $ \\
\mbox{} \vspace{+0.120cm} \\
\mbox{} \hspace{+4.270cm}
$ {\displaystyle
\leq\;
\|\;\! \mbox{\boldmath $u$}(\cdot, t/2)\hspace{+0.020cm} \|^{\:\!2}
\,+\;\!\;\!
\mbox{\small ${\displaystyle 
	  \frac{C_{\!f}^{\;\!2}}{\;\!2\:\!\alpha + 1\;\!} }$}
\,
\mbox{\small $ {\displaystyle
	  \frac{2^{\:\!2\:\!\alpha}\;\!}{2 \:\! \alpha} }$} 
\, (1 + t\hspace{+0.010cm})^{-\;\!2\:\!\alpha}
} $ \\
\mbox{} \vspace{+0.100cm} \\
\mbox{} \hspace{+4.270cm}
$ {\displaystyle
\leq\;
2^{\;\!2\hspace{+0.010cm}\alpha} \hspace{+0.040cm} 
C_{\!\;\!0}^{\;\!2} \;\!
(1 + t\hspace{+0.010cm})^{-\;\!2\:\!\alpha}
\;\!+\;\!\;\!
\mbox{\small ${\displaystyle 
	  \frac{C_{\!f}^{\;\!2}}{\;\!2\:\!\alpha + 1\;\!} }$}
\,
\mbox{\small $ {\displaystyle
	  \frac{2^{\:\!2\:\!\alpha}\;\!}{2 \:\! \alpha} }$} 
\, (1 + t\hspace{+0.010cm})^{-\;\!2\:\!\alpha}
} $ \\
\mbox{} \vspace{+0.050cm} \\
where in the last step
we have used that
\hspace{-0.030cm}(by \eqref{eqn_decay_u_f} above): 
$ \!\!\;\!\;\!\|\;\!\mbox{\boldmath $u$}(\cdot,t) \hspace{+0.020cm}
\|_{L^{2}(\mathbb{R}^{2})} \hspace{-0.020cm}\leq\!\;\! 
C_{\!\;\!0} \hspace{+0.020cm}
(1 +\hspace{+0.020cm} t)^{-\;\!\alpha} $
for some constant
$C_{\!\;\!0} \!\;\!> 0 $.
This produces \eqref{eqn_decay_gradient}, as claimed.
%
%
Now, from \eqref{eqn_theta} 
we have \\
\mbox{} \vspace{-0.010cm} \\
\mbox{} \hspace{-0.1250cm}
$ {\displaystyle
\mbox{\boldmath $\theta$}(\cdot,t)
\hspace{+0.020cm}=\;\!-\hspace{-0.020cm}
\int_{\mbox{}_{\scriptstyle \!\;\!0}}^{t}
\!\!\;\!
e^{\;\!\mbox{\footnotesize $ {\displaystyle
	   \Delta (t - s) }$}}
\hspace{+0.040cm} \bigl[\,
\mathbb{P}_{\!\;\!H} \hspace{+0.020cm} [\, \mbox{\boldmath $u$} 
\!\;\!\cdot\!\;\!\nabla \mbox{\boldmath $u$} \hspace{+0.042cm}]
\hspace{+0.010cm}(\cdot,s)
\hspace{+0.030cm}\bigr]
\dd s
=\;\!-\hspace{-0.020cm}
\int_{\mbox{}_{\scriptstyle \!\;\!0}}^{t}
\hspace{-0.070cm}
\mathbb{P}_{\!\;\!H} \hspace{+0.030cm} \bigl[\,
e^{\;\!\mbox{\footnotesize $ {\displaystyle
       \Delta (t - s) }$}}
\hspace{+0.040cm} 
[\, \mbox{\boldmath $u$} 
\!\;\!\cdot\!\;\!\nabla \mbox{\boldmath $u$} ]
\bigr]
\dd s
} $,\\
\mbox{} \vspace{+0.020cm} \\
for every $\hspace{+0.010cm} t \!\;\!> 0 $,
which gives \\
\mbox{} \vspace{-0.650cm} \\
\begin{equation}
\notag
\|\,\mbox{\boldmath $\theta$}(\cdot,t) \hspace{+0.030cm} \|
\hspace{+0.070cm} \leq
\int_{\mbox{}_{\scriptstyle \!\;\!0}}^{t}
\hspace{-0.050cm} \|\,
e^{\;\!\mbox{\footnotesize $ {\displaystyle
	   \Delta (t - s) }$}}
\hspace{+0.040cm} 
[\, \mbox{\boldmath $u$} 
\!\;\!\cdot\!\;\!\nabla \mbox{\boldmath $u$} \,](\cdot,s)
\;\!\|
\dd s
\end{equation}
\mbox{} \vspace{-0.700cm} \\
\begin{equation}
\notag
=
\int_{\mbox{}_{\scriptstyle \!\;\!0}}^{\hspace{-0.010cm}t/2}
\hspace{-0.070cm} \|\,
e^{\;\!\mbox{\footnotesize $ {\displaystyle
       \Delta (t - s) }$}}
\hspace{+0.040cm} 
[\, \mbox{\boldmath $u$} 
\!\;\!\cdot\!\;\!\nabla \mbox{\boldmath $u$} \,](\cdot,s)
\;\!\|
\dd s
\;+
\int_{\mbox{}_{\scriptstyle \!\;\!t/2}}^{\:\!t}
\hspace{-0.070cm}\|\,
e^{\;\!\mbox{\footnotesize $ {\displaystyle
	   \Delta (t - s) }$}}
\hspace{+0.040cm} 
[\, \mbox{\boldmath $u$} 
\!\;\!\cdot\!\;\!\nabla \mbox{\boldmath $u$} \,](\cdot,s)
\;\!\|
\dd s
\end{equation}
\mbox{} \vspace{-0.650cm} \\
\begin{equation}
\notag
\leq
\int_{\mbox{}_{\scriptstyle \!\;\!0}}^{t/2}
\hspace{-0.100cm} (t - s)^{-\;\!1} \,
\|\;\!\mbox{\boldmath $u$}(\cdot,s)\;\! \|^{\:\!2}
\dd s
\hspace{+0.100cm} + \hspace{+0.025cm}
\int_{\mbox{}_{\scriptstyle \!t/2}}^{t}
\hspace{-0.050cm} 
(t - s)^{-\;\!1/2} \,
\|\;\!\mbox{\boldmath $u$}(\cdot,s)\;\! \|\;
\hspace{+0.020cm}
\|\hspace{+0.020cm} D \hspace{+0.010cm} 
\mbox{\boldmath $u$}(\cdot,s)\;\! \|
\dd s
\end{equation}
\mbox{} \vspace{-0.075cm} \\
using standard properties of the heat kernel,
see e.g.\;(\cite{KreissHagstromLorenzZingano2003}, p.\hspace{+0.100cm}236)
or (\cite{Zhou2007}, p.\,1227).
By \eqref{eqn_decay_u_f} and \eqref{eqn_decay_gradient},
we then obtain \\
\mbox{} \vspace{-0.700cm} \\
\begin{equation}
\notag
\|\;\!\mbox{\boldmath $\theta$}(\cdot,t) \hspace{+0.030cm} \|
\;\!\;\!=\;
O\hspace{+0.010cm}(\hspace{+0.020cm} t^{\;\!-\;\!1}) 
\hspace{-0.070cm} 
\int_{\mbox{}_{\scriptstyle \!\;\!0}}^{t/2}
\hspace{-0.150cm}  
(1 + s)^{-\;\!2\hspace{+0.010cm}\alpha} 
\hspace{+0.040cm} ds
\hspace{+0.100cm}+\;
O\hspace{+0.0110cm}(\hspace{+0.020cm} 
t^{\;\!-\;\!2\hspace{+0.010cm} \alpha}).
\end{equation}
\mbox{} \vspace{-0.170cm} \\
This gives
$ \|\hspace{+0.040cm}
\mbox{\boldmath $\theta$}(\cdot,t)
\hspace{+0.030cm} \| 
\hspace{-0.020cm}=\hspace{+0.020cm} 
O(\hspace{+0.010cm}t^{\:\!-\;\!2 \hspace{+0.020cm} \alpha} 
\hspace{-0.010cm}) $
if $\hspace{+0.010cm} \alpha < 1/2 $,
$ \|\hspace{+0.040cm}
\mbox{\boldmath $\theta$}(\cdot,t)
\hspace{+0.030cm} \| 
\hspace{-0.020cm}=\hspace{+0.020cm} 
O(\hspace{+0.010cm} t^{\:\!-\;\!1}) \hspace{+0.000cm}
\log\;\!(2 + t) $
if $\hspace{+0.010cm} \alpha \hspace{-0.020cm}= 1/2 $,
and
$ \|\hspace{+0.040cm}
\mbox{\boldmath $\theta$}(\cdot,t)
\hspace{+0.030cm} \| 
\hspace{-0.020cm}=\hspace{+0.020cm} 
O(\hspace{+0.010cm}t^{\:\!-\;\!1}) $
if 
$ 1/2 < \alpha \leq \hspace{-0.010cm} (n+2)/4 = 1 $.
Since
$ \mbox{\boldmath $\theta$} =\hspace{+0.010cm}
\mbox{\boldmath $u$} -
\mbox{\boldmath $v$}$, \linebreak
it then follows from
$ \|\hspace{+0.040cm}
\mbox{\boldmath $u$}(\cdot,t)
\hspace{+0.030cm} \| 
\hspace{-0.020cm}=\hspace{+0.020cm} 
O(\hspace{+0.010cm}t^{\:\!-\;\!\alpha}) $,
cf.\;\eqref{eqn_decay_u_f},
that
$ \|\hspace{+0.040cm}
\mbox{\boldmath $v$}(\cdot,t)
\hspace{+0.030cm} \| 
\hspace{-0.020cm}=\hspace{+0.020cm} 
O(\hspace{+0.010cm}t^{\:\!-\;\!\alpha}) $.
This concludes the proof of
{\small \sc Theorem\:\ref{Inverse_Wiegner}}.
\end{proof}
%
%
%
\mbox{} \vspace{-1.050cm} \\
%
%
%
%
\begin{Remark}\label{remark1}
Let $\hspace{+0.010cm} 2 \leq n \leq 4 $,
$ \mbox{\boldmath $u$}_0 \hspace{-0.040cm} \in
L^{2}_{\sigma}(\mathbb{R}^{n}) $.
Given
$ \hspace{-0.015cm}\mbox{\boldmath $f$}(\cdot,t) \hspace{-0.015cm} 
\in\hspace{-0.020cm} L^{1}((0,\infty), L^{2}_{\sigma}(\mathbb{R}^{n})) $
satisfying \eqref{eqn_2_f} and,
in addition,
$ \|\hspace{+0.030cm} \mbox{\boldmath $f$}(\cdot,t)
\hspace{+0.020cm} \|_{\dot{H}^{m}} \!= 
O(1 + t)^{-\;\!\alpha\;\!-\:\!1\:\!-\;\! m/2} $
for some $ m \geq 0 $,
then 
it follows from Wiegner's theorems above
and\hspace{+0.020cm} 
(\hspace{+0.005cm}\cite{GuterresNichePerusatoZingano2021},\!
{\small \sc Theorem\;\!\;\!1.1})
that, for $ t \gg 1 $:
$ \|\hspace{+0.005cm} D^{\ell}\hspace{+0.005cm}
\mbox{\boldmath $u$}\hspace{+0.005cm}(\cdot,t)\hspace{+0.020cm}
\|_{L^{2}(\mathbb{R}^{n})} \hspace{-0.030cm}= 
\hspace{+0.010cm}O\hspace{+0.010cm}(\hspace{+0.010cm}
t^{\:\!-\;\!\alpha 
\hspace{+0.050cm}-\hspace{+0.050cm} 
\ell\hspace{+0.010cm}/2}\hspace{+0.010cm}) \hspace{-0.010cm}$
and \\ 
\mbox{} \vspace{-0.550cm} \\
\begin{equation}
\notag
\|\hspace{+0.040cm}\mbox{\boldmath $u$}(\cdot,t) -
\mbox{\boldmath $v$}(\cdot,t) \hspace{+0.020cm}
\|_{\mbox{}_{\scriptstyle \dot{H}^{\ell}(\mathbb{R}^{n})}}
\hspace{-0.050cm}=\hspace{+0.090cm}
\left\{\hspace{-0.020cm}
\begin{array}{ll}
\!o\hspace{+0.030cm}(\hspace{+0.010cm}t^{\:\!-\;\!
(n \hspace{+0.020cm}-\hspace{+0.020cm}2)\hspace{+0.010cm}/\hspace{+0.010cm}4
\hspace{+0.050cm}-\hspace{+0.050cm} \ell\hspace{+0.010cm}/\hspace{+0.010cm}2}
\hspace{+0.005cm}) & 
\mbox{ if \;} \alpha = 0 \\
\mbox{} \vspace{-0.250cm} \\
\!O\hspace{+0.010cm}(\hspace{+0.010cm}t^{\:\!-\;\!
2\;\!\alpha\,-\,(n \hspace{+0.020cm}-\hspace{+0.020cm}2)
\hspace{+0.010cm}/\hspace{+0.010cm}4  
\hspace{+0.050cm}-\hspace{+0.050cm} 
\ell\hspace{+0.010cm}/\hspace{+0.010cm}2}
\hspace{+0.005cm}) & 
\mbox{ if \;} 0 < \alpha < \frac{1}{2} \\
\mbox{} \vspace{-0.250cm} \\
\!O\hspace{+0.010cm}(\hspace{+0.010cm}t^{\:\!-\;\!
(n \hspace{+0.020cm}+\hspace{+0.020cm}2)\hspace{+0.010cm}/\hspace{+0.010cm}4
\hspace{+0.050cm}-\hspace{+0.050cm} \ell\hspace{+0.010cm}/\hspace{+0.010cm}2}
\hspace{+0.005cm}) \hspace{+0.005cm} \log\hspace{+0.020cm}t & 
\mbox{ if \;} \alpha = \frac{1}{2} \\
\mbox{} \vspace{-0.250cm} \\
\!O\hspace{+0.010cm}(\hspace{+0.010cm}t^{\:\!-\;\!
(n \hspace{+0.020cm}+\hspace{+0.020cm}2)\hspace{+0.010cm}/\hspace{+0.010cm}4
\hspace{+0.050cm}-\hspace{+0.010cm} \ell\hspace{+0.010cm}/\hspace{+0.010cm}2} 
\hspace{+0.005cm}) & 
\mbox{ if \;} \frac{1}{2} < \alpha \leq
\frac{n\hspace{+0.010cm}+\hspace{+0.020cm}2}{4} 
\end{array}
\right.
\end{equation}
\mbox{} \vspace{+0.050cm} \\
for every $\hspace{+0.010cm} 0 \leq \ell \leq m + 1 $,
provided that we have
$ \|\hspace{+0.040cm} \mbox{\boldmath $u$}(\cdot,t)
\hspace{+0.020cm} \|_{L^{2}(\mathbb{R}^{n})} \!=
O\hspace{+0.010cm}(1 + t)^{-\;\!\alpha} $
or 
$ \|\hspace{+0.040cm} \mbox{\boldmath $v$}(\cdot,t)
\hspace{+0.020cm} \|_{L^{2}(\mathbb{R}^{n})} \!=
O\hspace{+0.010cm}(1 + t)^{-\;\!\alpha} \!\;\!$. 
\hspace{-0.040cm}As before,
$ \mbox{\boldmath $v$}(\cdot,t) $
denotes the solution of~{\hspace{-0.030cm}\eqref{eqn_stokes_flow}}, 
$ \hspace{-0.040cm} \dot{H}^{\ell}(\mathbb{R}^{n}) $
stands for the homogeneous Sobolev space of order~$\ell$
$($\cite{Chemin2011}, p.\,$25$\hspace{+0.005cm}$)$, 
and
$ \|\hspace{+0.040cm} \mbox{\bf w} \hspace{+0.040cm}
\|_{\dot{H}^{\ell}(\mathbb{R}^{n})} \hspace{-0.070cm}=
\|\hspace{+0.030cm} D^{\ell}\hspace{+0.020cm} 
\mbox{\bf w} \hspace{+0.040cm} 
\|_{L^{2}(\mathbb{R}^{n})} $. 
See also \cite{OliverTiti2000, SchonbekWiegner}.
\end{Remark}
\mbox{} \vspace{-1.050cm} \\
%
%
%
%
\begin{Remark}\label{remark2}
\mbox{} \hspace{-0.150cm}Under the same conditions given in 
\hspace{+0.010cm}{\small \sc Remark\;\!\;\!\ref{remark1}},
\hspace{-0.040cm}but assuming more strongly 
that 
$ \|\hspace{+0.010cm} \mbox{\boldmath $f$}(\cdot,t) 
\hspace{+0.020cm}\|_{L^{2}(\mathbb{R}^{n})} 
\hspace{-0.040cm}=\hspace{+0.010cm}
o\hspace{+0.020cm}(1 + t)^{-\;\!\alpha \;\!-\:\!1} \hspace{-0.050cm}$,
it follows from\,Wiegner's theorems 
and $($\hspace{+0.005cm}\cite{GuterresNichePerusatoZingano2021},\!
{\small \sc Theorem\,1.5}\hspace{+0.010cm}$)$
\hspace{-0.040cm}that \\
\mbox{} \vspace{-0.700cm} \\
\begin{equation}
\notag
\liminf_{t\,\rightarrow\,\infty} \hspace{+0.050cm}
t^{\:\!\alpha \hspace{+0.050cm}+\hspace{+0.050cm}
\ell\hspace{+0.010cm}/2} \;\!
\|\hspace{+0.020cm}D^{\ell} 
\mbox{\boldmath $u$}\hspace{+0.010cm}(\cdot,t) 
\hspace{+0.020cm}
\|_{\mbox{}_{\scriptstyle L^{2}(\mathbb{R}^{n})}}
\hspace{-0.020cm}>\hspace{+0.050cm}0
\end{equation}
\mbox{} \vspace{-0.225cm} \\
for all
$\hspace{+0.030cm} 0 \hspace{-0.010cm}\leq\hspace{-0.015cm} 
\ell \hspace{-0.020cm}\leq\hspace{-0.015cm} m + 1$,
provided that one has
$ {\displaystyle 
\lambda(\alpha) \hspace{-0.030cm}\equiv\hspace{+0.010cm}
\liminf_{t\,\rightarrow\,\infty} \hspace{+0.030cm}
t^{\:\!\alpha} \hspace{+0.020cm}
\|\hspace{+0.050cm}\mbox{\boldmath $u$}(\cdot,t) \hspace{+0.020cm}
\|_{\scriptstyle L^{2}(\mathbb{R}^{n})}
\hspace{-0.060cm}> 0 \hspace{+0.030cm}
} $ 
or else that
$\hspace{+0.020cm} 0 \!\;\!\leq\!\;\! 
\alpha \!\;\!<\!\;\! (n + 2)/4 \hspace{+0.030cm} $
and 
$ {\displaystyle 
\hspace{+0.050cm}
\liminf_{t\,\rightarrow\,\infty} \hspace{+0.030cm}
t^{\:\!\alpha} \hspace{+0.020cm}
\|\hspace{+0.050cm}\mbox{\boldmath $v$}(\cdot,t) \hspace{+0.020cm}
\|_{\scriptstyle L^{2}(\mathbb{R}^{n})}
\hspace{-0.040cm} > 0
} $,
where 
$ \mbox{\boldmath $v$}(\cdot,t) $
is the Stokes flow given in (\ref{eqn_stokes_flow})
above.
Moreover
$($cf.\hspace{+0.070cm}\cite{GuterresNichePerusatoZingano2021},%
\;\!{\small \sc Theorem\,1.5}\hspace{+0.010cm}$)$,  \\
\mbox{} \vspace{-0.575cm} \\
\begin{equation}
\notag
\liminf_{t\,\rightarrow\,\infty} \hspace{+0.060cm}
t^{\:\!\alpha \hspace{+0.040cm}+\hspace{+0.040cm}
\ell/2} \hspace{+0.050cm}
\|\hspace{+0.020cm} D^{\ell}
\mbox{\boldmath $u$}(\cdot,t) \hspace{+0.020cm}
\|_{\mbox{}_{\scriptstyle L^{2}(\mathbb{R}^{n})}}
\hspace{-0.040cm}\geq\hspace{+0.010cm}
K\hspace{-0.020cm}(\alpha,\ell,r) 
\cdot \hspace{+0.020cm}
\liminf_{t\,\rightarrow\,\infty} \hspace{+0.060cm}
t^{\:\!\alpha} \hspace{+0.030cm}
\|\hspace{+0.040cm}\mbox{\boldmath $u$}(\cdot,t) \hspace{+0.020cm}
\|_{\mbox{}_{\scriptstyle L^{2}(\mathbb{R}^{n})}}
\end{equation}
\mbox{} \vspace{-0.175cm} \\
where 
$ {\displaystyle
\hspace{+0.020cm}
r = \lambda(\alpha)/
\limsup_{t\,\rightarrow\,\infty} \hspace{+0.050cm}
t^{\:\!\alpha} \;\!
\|\hspace{+0.050cm}\mbox{\boldmath $u$}(\cdot,t) \hspace{+0.020cm}
\|_{\scriptstyle L^{2}(\mathbb{R}^{n})} 
} $, 
for some constant
$\hspace{-0.020cm}K\hspace{-0.020cm}(\alpha,\ell,r) 
\!\;\!> 0 \hspace{+0.020cm} $
which depends only on $ \alpha, \ell \hspace{+0.010cm}$
and $ r $,
and not on
$ \mbox{\boldmath $u$}_0\hspace{+0.010cm}
(\hspace{+0.010cm}\cdot\hspace{+0.010cm})$ 
or the solution $ \mbox{\boldmath $u$}(\cdot,t) $.
For other results related to this discussion,
see e.g.\hspace{+0.100cm}\cite{MR3493117, BrandoleseSchonbek2018, 
HagstromLorenzZinganoZingano2019, SchonbekWiegner}.
\end{Remark}
%
%
\mbox{} \vspace{-0.900cm} \\
%
%
%
%
%

\section{Applications}
\label{sec:dc}

\subsection{Two-sided algebraic estimates}

A first application of both Wiegner and inverse Wiegner theorem
in the case of the unforced Navier--Stokes equations,
is the following complete characterization of solutions satisfying two-sided algebraic decay estimates.

\begin{Corollary}
\label{cor:tsb}
Let $\hspace{+0.020cm}0<\alpha<(n+2)/2$, $\boldsymbol u_0\in L^2_\sigma(\R^n)$
\hspace{-0.030cm}and 
$\boldsymbol u$\hspace{+0.010cm}$(\cdot,t)$ 
be a Leray solution arising from $\boldsymbol u_0$, 
with $\boldsymbol f\equiv0$.
Then the two following properties are equivalent:
\begin{itemize}
\item[(i)] $(1+t)^{-\alpha}\lesssim \hspace{+0.030cm}
\|\hspace{+0.030cm}\boldsymbol u(\cdot,t)
\hspace{+0.030cm}\|^2_{L^2(\RR^n)} 
\hspace{+0.010cm}\lesssim \hspace{+0.030cm}
(1+t)^{-\alpha}$.
\item[(ii)] $(1+t)^{-\alpha}
\hspace{+0.010cm}\lesssim\hspace{+0.030cm}
\|\hspace{+0.050cm}e^{t\Delta}\boldsymbol u_0
\hspace{+0.020cm}\|^2_{L^2(\RR^n)}
\lesssim \hspace{+0.010cm}
(1+t)^{-\alpha}$.
\end{itemize}
\end{Corollary}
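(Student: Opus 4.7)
The plan is to treat the corollary as a direct consequence of Theorems~\ref{Wiegner_thm} and~\ref{Inverse_Wiegner}, reduced to the following observation: whenever $\|\boldsymbol u\|$ or $\|\boldsymbol v\|$ is of order $(1+t)^{-\alpha/2}$, the error $\boldsymbol\theta := \boldsymbol u - \boldsymbol v$ decays strictly faster. Setting $\beta := \alpha/2$, the hypothesis $0 < \alpha < (n+2)/2$ reads $0 < \beta < (n+2)/4$ with \emph{strict} upper inequality, which keeps us inside the admissible range of both theorems (with $\boldsymbol f \equiv 0$, condition~\eqref{eqn_2_f} is vacuous).

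I would first apply Theorem~\ref{Inverse_Wiegner} at exponent $\beta$ to convert the upper bound in (i) into the upper bound in (ii), and Theorem~\ref{Wiegner_thm} at the same exponent for the converse; this proves the equivalence of the upper bounds. Using now that both upper bounds hold simultaneously, I would invoke estimate~\eqref{eqn_Wiegner_estimate} with $\beta=\alpha/2$ to control $\|\boldsymbol\theta(\cdot,t)\|$. A case-by-case inspection shows $\|\boldsymbol\theta(\cdot,t)\| = o\bigl((1+t)^{-\beta}\bigr)$ as $t\to\infty$: for $0<\beta<1/2$ the rate $2\beta + (n-2)/4$ exceeds $\beta$ since $\beta + (n-2)/4 > 0$ (as $n\ge 2$); for $\beta = 1/2$ the factor $(1+t)^{-(n+2)/4}\log(2+t)$ is $o\bigl((1+t)^{-1/2}\bigr)$ because $(n+2)/4 > 1/2$; and for $1/2 < \beta < (n+2)/4$ the rate $(n+2)/4$ strictly dominates $\beta$ by our hypothesis.

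The conclusion would follow from the reverse triangle inequality
\[
\bigl|\|\boldsymbol u(\cdot,t)\| - \|\boldsymbol v(\cdot,t)\|\bigr| \le \|\boldsymbol\theta(\cdot,t)\| = o\bigl((1+t)^{-\beta}\bigr),
\]
which forces $\|\boldsymbol u(\cdot,t)\|^2 = \|\boldsymbol v(\cdot,t)\|^2\bigl(1+o(1)\bigr)$ as $t\to\infty$, making the two lower bounds equivalent for large $t$. For $t$ in a compact interval both quantities are bounded below by positive constants (since $\boldsymbol u_0\ne 0$ is forced by either lower bound), so the two-sided estimates extend to all $t\ge 0$ after enlarging the multiplicative constants. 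The main subtlety -- and the only place where the strict inequality $\alpha < (n+2)/2$ is indispensable -- is the third subcase: at $\alpha = (n+2)/2$ the error $\|\boldsymbol\theta\|$ would decay at the same rate as $\|\boldsymbol v\|$, and the comparison step would collapse.
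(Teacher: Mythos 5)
Your proof is correct and follows essentially the same route as the paper: the paper's (sketched) argument likewise converts the upper bounds into one another via Theorem~\ref{Inverse_Wiegner} and Theorem~\ref{Wiegner_thm} applied at exponent $\alpha/2$, and then transfers the lower bounds using the fact that estimate~\eqref{eqn_Wiegner_estimate} forces $\|\boldsymbol u(\cdot,t)-\boldsymbol v(\cdot,t)\|$ to decay strictly faster than $(1+t)^{-\alpha/2}$ whenever $\alpha/2<(n+2)/4$. Your case-by-case verification of the faster decay and the reverse triangle-inequality step simply make explicit what the paper's ``this is immediate'' leaves implicit.
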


This is immediate.
The fact that (ii) implies (i) just relies 
on direct Wiegner's (Theorem~\ref{Wiegner_thm}).
The inverse Wiegner theorem is needed to prove that 
the upper bound in (i) implies the upper bound in (ii). 
Then one uses once more the direct Wiegner theorem to prove 
that the lower bound in (i) implies the lower bound in (ii).

The main interest of Corollary~\ref{cor:tsb} is that 
one can completely caracterize
the initial data such that (ii) holds.
\begin{Theorem}
\label{th:char}
Let $\alpha>0$, $u_0\in L^2(\RR^n)$. 
The following properties
are equivalent\hspace{+0.040cm}$:$
\begin{itemize}
\item[(i)] $(1+t)^{-\alpha}\lesssim\|e^{t\Delta}u_0\|^2_{L^2(\RR^n)}\lesssim (1+t)^{-\alpha}$,
\item[(ii)]
\,\,\,\,\,\,\mbox{$\displaystyle\liminf_{\rho\to0+}\rho^{-2\alpha}\int_{|\xi|\le \rho} |\widehat u_0(\xi)|^2\dd \xi>0$}\quad and\quad 
	\vspace{+0.15cm}
    \begin{sloppypar}
\mbox{$\displaystyle\limsup_{\rho\to0+} \rho^{-2\alpha}\int_{|\xi|\le \rho}|\widehat u_0(\xi)|^2\dd\xi<\infty$,}
\end{sloppypar}
\vspace{+0.3cm}
\item[(iii)] $u_0\in \dot{\mathcal{A}}^{-\alpha}_{2,\infty}$.
\end{itemize}
\end{Theorem}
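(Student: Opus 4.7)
The plan is to move everything to frequency space via Plancherel's identity,
\[
\|e^{t\Delta}u_0\|_{L^2(\mathbb{R}^n)}^{2} \,=\, \int_{\mathbb{R}^n} e^{-2t|\xi|^2}|\widehat{u_0}(\xi)|^2\dd\xi,
\]
and to work with the low-frequency distribution function $M(\rho):=\int_{|\xi|\le\rho}|\widehat{u_0}(\xi)|^2\dd\xi$, writing $F(t):=\|e^{t\Delta}u_0\|_{L^2}^{2}$. The core of the proof is then the equivalence of the two-sided decay of $F(t)$ as $t\to\infty$ with the two-sided low-frequency behaviour of $M(\rho)$ as $\rho\to 0^+$; this is an Abelian--Tauberian pair for a Laplace-type transform of $M$. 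Once that is established, (iii) will follow by unpacking the definition of $\dot{\mathcal{A}}^{-\alpha}_{2,\infty}$, which is designed precisely so that membership is equivalent to the two-sided condition appearing in (ii).

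For the Abelian direction (ii)$\Rightarrow$(i), I would integrate by parts in the Stieltjes representation of $F$, using that $M(0)=0$ and that $e^{-2t\rho^2}M(\rho)\to 0$ as $\rho\to\infty$ (since $M\le\|u_0\|_{L^2}^2$), and then substitute $\sigma=\rho\sqrt t$ to obtain
\[
F(t)\,=\,4\!\int_0^\infty\! \sigma\, e^{-2\sigma^2}\,M(\sigma/\sqrt t)\dd\sigma.
\]
Splitting the $\sigma$-integral at $\sigma=\delta\sqrt t$ and feeding in $M(\rho)\asymp \rho^{2\alpha}$ for $\rho$ small, together with the trivial bound $M\le\|u_0\|_{L^2}^{2}$ on the Gaussian tail, yields $F(t)\asymp t^{-\alpha}$ for $t\gg 1$. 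Small-$t$ behaviour is absorbed into $(1+t)^{-\alpha}$ by continuity and positivity of $F$ (note that the lower bound in (ii) forces $u_0\not\equiv 0$).

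The converse (i)$\Rightarrow$(ii) splits into two halves. The upper bound $M(\rho)\lesssim\rho^{2\alpha}$ is elementary: $M(\rho)\le e^{2t\rho^2}F(t)$, so choosing $t=1/\rho^2$ and applying the upper bound of (i) finishes it. The \emph{lower} bound is the main obstacle. The plan is to exploit the semigroup factorisation $e^{-2t|\xi|^2}\le e^{-t\rho^2}e^{-t|\xi|^2}$ on $\{|\xi|>\rho\}$ to get
\[
F(t)\,\le\,M(\rho)\,+\,e^{-t\rho^2}F(t/2).
\]
Setting $t=c/\rho^2$ with a large constant $c$, and using the \emph{lower} bound of (i) for $F(t)$ and the \emph{upper} bound of (i) for $F(t/2)$, one obtains for small $\rho$
\[
M(\rho)\,\ge\,c^{-\alpha}\rho^{2\alpha}\bigl(c_1-c_2\,2^\alpha e^{-c}+o(1)\bigr),
\]
so choosing $c$ large enough to make the bracket bigger than $c_1/2$ yields $\liminf_{\rho\to 0^+}\rho^{-2\alpha}M(\rho)>0$, which is the remaining half of (ii). Finally, (ii)$\Leftrightarrow$(iii) is essentially by definition of $\dot{\mathcal{A}}^{-\alpha}_{2,\infty}$ as the class of tempered distributions whose low-frequency $L^2$-mass satisfies exactly the two-sided condition in (ii). The delicate point, as indicated, is the Tauberian step requiring the semigroup factorisation; without it, the trivial estimate $F(t/2)\le\|u_0\|_{L^2}^{2}$ produces an error term independent of $\rho$ which swamps the desired $\rho^{2\alpha}$ lower bound.
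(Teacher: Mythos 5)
Your proof of the equivalence (i)$\Leftrightarrow$(ii) is correct and takes a genuinely different route from the paper's. The Abelian direction via the Stieltjes representation $F(t)=4\int_0^\infty \sigma e^{-2\sigma^2}M(\sigma/\sqrt t)\,\dd\sigma$ replaces the paper's Fourier-splitting differential inequality (the paper's proof of the upper bound in (ii)$\Rightarrow$(i)), and your Tauberian step --- the factorisation $F(t)\le M(\rho)+e^{-t\rho^2}F(t/2)$ with $t=c/\rho^2$, $c$ large, so that the error $c_2 2^\alpha e^{-c}$ is beaten by the main term $c_1$ --- replaces the paper's detour (i)$\Rightarrow$(iii)$\Rightarrow$(ii) through Littlewood--Paley analysis. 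Both steps check out, and they give a direct, self-contained proof of (i)$\Leftrightarrow$(ii) that is arguably more elementary than the paper's.

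The genuine gap is the claim that (ii)$\Leftrightarrow$(iii) is ``essentially by definition.'' It is not: the paper defines $\dot{\mathcal{A}}^{-\alpha}_{2,\infty}$ through dyadic blocks, namely $u_0\in\dot B^{-\alpha}_{2,\infty}(\R^n)$ (so $\|\Delta_j u_0\|\le C2^{\alpha j}$ for \emph{all} $j\in\Z$) together with the lower bound $c2^{\alpha j_k}\le\|\Delta_{j_k}u_0\|$ along a sequence $j_k\to-\infty$ with \emph{bounded gaps}, condition~\eqref{eq:lowa} --- not through the low-frequency mass $M(\rho)$. Passing between the block conditions and (ii) needs an argument in both directions, and this is precisely the content of the paper's steps (i)$\Rightarrow$(iii) and (iii)$\Rightarrow$(ii). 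For (iii)$\Rightarrow$(ii): given small $\rho$, one picks $j_k$ with $2^{j_k}\le\rho\le 2^{j_k+M}$ (possible exactly because the gaps are bounded) and uses $\int_{|\xi|\le (8/3)\rho}|\widehat u_0(\xi)|^2\,\dd\xi\ge\|\Delta_{j_k}u_0\|^2\gtrsim (2^{-M}\rho)^{2\alpha}$; if the gaps were unbounded, the $\liminf$ in (ii) could genuinely fail, since the spectral mass could concentrate on a sparse sequence of scales (e.g.\ $|\xi|\simeq 2^{-k^2}$), so this hypothesis is doing real work. For (ii)$\Rightarrow$(iii): one must \emph{produce} a bounded-gap sequence, e.g.\ by observing that the two bounds in (ii) force the annulus $\{\epsilon\rho<|\xi|\le\rho\}$, for a suitable fixed $\epsilon$, to carry mass $\gtrsim\rho^{2\alpha}$, that this annulus meets only a bounded number of dyadic blocks, and hence by pigeonhole one block satisfies $\|\Delta_j u_0\|^2\gtrsim 4^{\alpha j}$; one also needs the $\limsup$ half of (ii) to give the full Besov upper bound by summing blocks. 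These arguments are short, but they are not definitional, and your proposal omits them entirely, so the equivalence with (iii) is unproven as written.
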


The definition of $\dot{\mathcal{A}}^{-\alpha}_{2,\infty}$,
which is a suitable subset of the classical Besov space $\dot B^{-\alpha}_{2,\infty}(\RR^n)$,
is provided below.
Theorem~\ref{th:char} was obtained in~\cite{MR3493117} as an application
of the theory of decay characters, first introduced in~\cite{MR2493562}.
Here we would like to propose a shorter and self-contained proof of the 
above theorem, that does not require any knowledge of decay characters.

Let $\varphi$ be a smooth radial function with support contained in the annulus 
$\{\xi\in \RR^n\colon 3/4\le|\xi|\le8/3\}$, such that $\sum_{j\in\Z} \varphi(\xi/2^{-j})=1$ for all 
$\xi\in\RR^n\backslash\{0\}$.
Let $\Delta_j$ be the Littlewood--Paley localization operator around the frequency $|\xi|\simeq 2^j$, $j\in\Z$, namely, $\widehat{\Delta_j f}=\varphi(\cdot/2^{j})\widehat f$.
Let $\alpha>0$. We recall that the homogeneous Besov space 
$\dot B^{-\alpha}_{2,\infty}(\R^n)$ can be defined as the space of all tempered 
distributions~$f$, such that $f=\sum_{j\in\Z}\Delta_j f$ in $\mathcal{S}'(\R^n)$,
and, for some $C\ge0$ and all $j\in\Z$,
\begin{equation}
\label{eq:lpub}
\|\Delta_j f\|\le C2^{\alpha j}.
\end{equation}
The $\dot B^{-\alpha}_{2,\infty}$-norm is then the best constant~$C$ in  inequality~\eqref{eq:lpub}.
See~\cite[Chapt.~2]{Chemin2011}.
By definition, $\dot{\mathcal{A}}^{-\alpha}_{2,\infty}$
is the subset of $\dot B^{-\alpha}_{2,\infty}(\R^n)$ such that
the corresponding lower bound in~\eqref{eq:lpub} does hold, at least for an affine-type sequence $j_k\to-\infty$. More precisely, in addition to~\eqref{eq:lpub}, the elements
of $\dot{\mathcal{A}}^{-\alpha}_{2,\infty}$ must satisfy \\
\mbox{} \vspace{-0.750cm} \\
\begin{equation}
\label{eq:lowa}
\exists c,M>0,\;\forall k\in\N,\; \exists j_k\in [-(k+1)M,-kM]\text{ such that \;} c2^{\alpha j_k}\le  \|\Delta_{j_k} f\|_2.
\end{equation}

Let us now prove Theorem~\ref{th:char}.
\begin{proof}
``(ii)\,$\Rightarrow$(i)".
The lower bound is immediate.
Indeed, let $g(t)=(1+t)^{-1/2}$. 
For some $t_0>0$ and all $t\ge t_0$, 
we get from assumption~(ii),
$\|e^{t\Delta}u_0\|\gtrsim \int_{|\xi|\le g(t)}
|\widehat u_0(\xi)|^2\, \dd\xi\gtrsim g(t)^{2\alpha}$.

The upper bound is an application of Schonbek's Fourier splitting method~\cite{MR775190}. 
We have
$\int|\xi|^2e^{-2t|\xi|^2}|\widehat 
u_0(\xi)|^2\dd\xi\ge g(t)^2\|e^{t\Delta}u_0\|^2-g(t)^2\int_{|\xi|\le g(t)}e^{-2t|\xi|^2}|\widehat u_0(\xi)|^2\dd\xi$.
Hence, from the energy equality for the heat equation,
$
\frac{d}{dt}\|e^{t\Delta}u_0\|^2+2\int|\xi|^2e^{-2t|\xi|^2}|\widehat 
u_0(\xi)|^2\dd\xi=0
$,
we deduce
\[
\frac{d}{dt}\|e^{t\Delta}u_0\|^2
+g(t)^2\|e^{t\Delta}u_0\|^2
\le g(t)^2\int_{|\xi|\le g(t)} e^{-2t|\xi|^2}|\widehat u_0(\xi)|^2\dd\xi.
\]
By assumption (ii), the right-hand side is bounded, for some $t_0\ge0$ and
all $t\ge t_0$, by $g(t)^2g(t)^{2\alpha}=(1+t)^{-1-\alpha}$.
The upper bound for $\|e^{t\Delta}u_0\|^2$ then follows after multiplying the differential
inequality by $(1+t)^{2\alpha}$.

\medskip

``(i)\,$\Rightarrow$(iii)".
We rely on the well known fact that a tempered distribution $u_0$ belongs to the Besov space $\dot B^{-\alpha}_{2,\infty}(\R^n)$ if and only if $\sup_{t>0}t^{\alpha/2}\|e^{t\Delta}u_0\|<\infty$, the latter quantity being an equivalent norm to $\|u_0\|_{\dot B^{-\alpha}_{2,\infty}}$. See \cite[Chapter~2]{Chemin2011}.
So, assumption (i) implies that $0<\|u_0\|_{\dot B^{-\alpha}_{2,\infty}}<\infty$.
Let $t>0$ and $p\in\Z$ such that $4^{p}\le t<4^{p+1}$. We have, for some constant $c>0$
independent on~$t$,
\[
\begin{split}
0<c\le t^\alpha\|e^{t\Delta}u_0\|^2
&\le \sum_{j\in \Z}t^\alpha e^{-(2/3)^24^j t}\|\Delta_j u_0\|^2\\
&\le 4^\alpha\sum_{j\in\Z} 4^{p\alpha}e^{-(2/3)^24^j}\|\Delta_{j-p}u_0\|^2\\
&\le 4^\alpha\sum_{j\in\Z} e^{-(2/3)^24^j} 4^{\alpha j}\|u_0\|_{\dot B^{-\alpha}_{2,\infty}}^2.
\end{split}
\]
As $(e^{-(2/3)^24^j}4^{j\alpha})\in\ell^1(\Z)$, there exists $M>0$ such that
\[
4^\alpha \sum_{|j|>M} e^{-(2/3)^24^j}4^{j\alpha}<c/(2\|u_0\|_{\dot B^{-\alpha}_{2,\infty}}^2).
\]
So,
\[
4^\alpha\sum_{|j|\le M} 4^{p\alpha}e^{-(2/3)^24^j}\|\Delta_{j-p}u_0\|^2
\ge c/2.
\]
This implies the existence of a constant $\delta=\delta(\alpha,c,M)>0$ such that,
for all $p\in\Z$,
\[
\max_{|j|\le M} \|\Delta_{j-p}u_0\|^2\ge\delta 4^{-p\alpha}.
\]
This implies~\eqref{eq:lowa} and so $u_0\in\dot{\mathcal{A}}^{-\alpha}_{2,\infty}$.

\medskip
``iii)\,$\Rightarrow$ii)".
From the fact that $u_0\in \dot{\mathcal{A}}^{-\alpha}_{2,\infty}$,
we can find a constant $c>0$ and a sequence $(j_k)$ of integers, such that $j_k\to-\infty$ and for all $k$
$\|\Delta_{j_k}u_0\|_{L^2}^2\ge c 4^{j_k\alpha}$, 
with $M\equiv\|(j_k-j_{k+1})\|_{\ell^\infty}<\infty$.
Let $\rho_0=2^{j_0}$. 
For any $0<\rho\le \rho_0$, let $j_k$ be the largest integer of the sequence
$(j_k)$ such that $2^{j_k}\le \rho$.
Then, $2^{j_k}\le \rho \le 2^{j_k+M}$.
Hence,
\[
(\textstyle\frac83\rho)^{-2\alpha}\int_{|\xi|\le \frac83 \rho}|\widehat u_0(\xi)|^2\dd \xi 
\ge (\frac83)^{-2\alpha} 2^{-2 (j_k+M)\alpha} \|\Delta_{j_k} u_0\|^2
\ge c2^{-2M}(\frac83)^{-2\alpha}.
\]
This implies  the $\liminf$ condition in (ii).
Moreover, for $\rho>0$, let $j\in\Z$ such that $2^{j-1}\le\rho\le 2^{j}$. Then,
\[
\textstyle(\frac23\rho)^{-2\alpha}\int_{|\xi|\le \frac23\rho}
|\widehat u_0(\xi)|^2
\lesssim
(\textstyle\frac23\rho)^{-2\alpha}\sum_{k\le j} \|\Delta_k u_0\|_{L^2}^2
\le C\|u_0\|_{\dot B^{-\alpha}_{2,\infty}},
\]
for some constant $C>0$ independent on~$\rho$.
Hence, the  $\limsup$ condition in (ii) just follows from the fact that $u_0\in \dot B^{-\alpha}_{2,\infty}$.
\end{proof}

\medskip
\subsection{On the generecity of algebraic decays}

We will apply the previous results to the genericity problem of
algebraic estimates, from above and below, for solutions of the 
Navier--Stokes equations. 

First of all, we prove that, in the class of Leray's solutions of the unforced Navier--Stokes equations, the subclass of solutions with a $L^2$-algebraic decay
is negligible, in a topological sense.
To do this, let us consider the class $\mathcal{L}$ of all Leray's solutions
to the unforced Navier--Stokes equations. 
We endow $\mathcal{L}$ with the initial topology induced by the map 
$I\colon \mathcal{L}\to  L^2_\sigma$, 
where $I$ is the map $\boldsymbol u\mapsto \boldsymbol u_0$, 
that associates, to a solution $\boldsymbol u$, 
the corresponding initial datum.

\begin{Theorem}
\label{th:genler}
With the above topology, $\mathcal{L}$ is a Baire space 
and the
set of unforced Leray's solutions with algebraic decay 
is meager in $\mathcal{L}$.
\end{Theorem}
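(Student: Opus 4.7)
The plan is to pull the Baire property back from $L^2_\sigma(\R^n)$ through the map $I$, and then to realize the class of algebraically decaying solutions as the preimage of a countable union of closed nowhere-dense subsets of $L^2_\sigma(\R^n)$. First, since the topology on $\mathcal{L}$ is initial for $I$, every open (resp.\ closed) set in $\mathcal{L}$ has the form $I^{-1}(U)$ with $U$ open (resp.\ closed) in $L^2_\sigma(\R^n)$. As $I$ is surjective (Leray solutions exist for every datum), a direct verification gives, for every $F\subseteq \mathcal{L}$, the identities $\overline F = I^{-1}(\overline{I(F)})$ and $\mathrm{int}(\overline F) = I^{-1}(\mathrm{int}\,\overline{I(F)})$. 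Consequently, $F$ is nowhere dense (resp.\ meager) in $\mathcal{L}$ if and only if $I(F)$ is nowhere dense (resp.\ meager) in $L^2_\sigma(\R^n)$. The completeness of the Hilbert space $L^2_\sigma(\R^n)$ therefore transfers the Baire property to $\mathcal{L}$.

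Next, let $\mathcal{A}\subseteq \mathcal{L}$ denote the class of Leray solutions with algebraic decay, and for each $k,m\in\N$ define
\[
H_{k,m}:=\bigl\{\,u_0\in L^2_\sigma(\R^n)\colon \|e^{t\Delta}u_0\|_{L^2}\le m(1+t)^{-1/k}\text{ for every }t\ge0\,\bigr\}.
\]
Each $H_{k,m}$ is closed, thanks to the strong continuity of $e^{t\Delta}$ on $L^2$. If $\boldsymbol u\in\mathcal{A}$ satisfies $\|\boldsymbol u(\cdot,t)\|=O(1+t)^{-\alpha}$ with $\alpha>0$, then Theorem~\ref{Inverse_Wiegner}, applied with exponent $\min(\alpha,(n+2)/4)>0$, yields $\|e^{t\Delta}\boldsymbol u_0\|=O(1+t)^{-1/k}$ for some $k\in\N$, so $\boldsymbol u_0\in H_{k,m}$ for a suitable $m$. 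Hence $\mathcal{A}\subseteq I^{-1}\!\bigl(\bigcup_{k,m\in\N} H_{k,m}\bigr)$, and by the transfer principle from the previous paragraph, meagerness of $\mathcal{A}$ reduces to showing that each $H_{k,m}$ has empty interior in $L^2_\sigma(\R^n)$.

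The heart of the argument, and the main obstacle, is this last step: given $u_0\in H_{k,m}$ and $\epsilon>0$, we must produce a divergence-free perturbation $w$ with $\|w\|_{L^2}<\epsilon$ such that $u_0+w\notin H_{k,m}$. The natural construction is in frequency space. Fix $\gamma\in(\max\{0,\,n/2-2/k\},\,n/2)$, a smooth radial cutoff $\chi$ supported near the origin, and a measurable unit vector field $\pi(\xi)$ perpendicular to $\xi$; then set $\widehat w(\xi)=\delta\,|\xi|^{-\gamma}\chi(\xi)\,\pi(\xi)$. The condition $\gamma<n/2$ ensures $w\in L^2_\sigma(\R^n)$ with $\|w\|_{L^2}^2\lesssim\delta^2$, while a Gaussian rescaling yields $\|e^{t\Delta}w\|_{L^2}^2\gtrsim\delta^2\, t^{-(n-2\gamma)/2}$ for large $t$. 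Since $\gamma>n/2-2/k$ forces $(n-2\gamma)/4<1/k$, the heat flow of $w$ decays strictly slower than $(1+t)^{-1/k}$; in particular $\|e^{t\Delta}w\|\ge 3m(1+t)^{-1/k}$ for $t$ large enough, so the triangle inequality gives $\|e^{t\Delta}(u_0+w)\|>m(1+t)^{-1/k}$, i.e., $u_0+w\notin H_{k,m}$. Choosing $\delta=O(\epsilon)$ completes the argument; the delicate point will be the verification that $\pi$ can be chosen measurably so that $w\in L^2_\sigma$ while preserving the lower bound on $\|e^{t\Delta}w\|_{L^2}$.
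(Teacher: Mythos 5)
Your proof is correct, and its skeleton matches the paper's: the same topological transfer (your identities $\overline F=I^{-1}(\overline{I(F)})$ and $\mathrm{int}(\overline F)=I^{-1}\bigl(\mathrm{int}\,\overline{I(F)}\bigr)$, valid by surjectivity, are exactly the content of the paper's Lemma~\ref{lem:baire}), the same use of Theorem~\ref{Inverse_Wiegner} to push algebraic decay of $\boldsymbol u$ onto the heat flow $e^{t\Delta}\boldsymbol u_0$, and the same plan of trapping the bad data inside a countable union of closed sets with empty interior. Where you genuinely diverge is in the choice of those sets and of the perturbation. The paper moves to the Fourier side: it bounds the low-frequency mass $\int_{|\xi|\le\rho}|\widehat{\boldsymbol u}_0|^2\lesssim\|e^{\Delta/\rho^2}\boldsymbol u_0\|^2$, works with the closed sets $A_{\alpha,\rho_0,K}$ of Proposition~\ref{prop:meag} defined by $\int_{|\xi|\le\rho}|\widehat{\boldsymbol u}_0|^2\le K\rho^{2\alpha}$, and kills their interiors with one universal perturbation, $\widehat{\boldsymbol v}_0(\xi)=(-i\xi_2,i\xi_1,0,\dots,0)\,|\xi|^{-1-n/2}(\log|\xi|)^{-1}$, whose logarithmic low-frequency mass $\approx|\log\rho|^{-1}$ beats every power $\rho^{2\alpha}$ simultaneously. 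You instead stay entirely on the semigroup side: your sets $H_{k,m}$ are closed by the $L^2$-boundedness of $e^{t\Delta}$ at each fixed $t$ (this, rather than ``strong continuity'', is the property you are actually invoking, but the verification is the one you intend), and you use a $k$-dependent power-law profile $|\xi|^{-\gamma}$ with $n/2-2/k<\gamma<n/2$ plus a Gaussian rescaling lower bound. Your route is more self-contained for this particular theorem, since it never needs the Fourier-mass characterization; the paper's route buys a clean Fourier description of the exceptional set (Eq.~\eqref{eq:setA}), a single perturbation serving all exponents at once, and it recycles machinery (Fourier splitting, Theorem~\ref{th:char}) needed anyway for Theorem~\ref{th:genler2}.

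The ``delicate point'' you flag at the end is not a gap, but it does require the one twist the paper builds into its formula: velocity fields must be real-valued, and if $\pi(\xi)$ is real and odd, e.g.\ $\pi(\xi)=(-\xi_2,\xi_1,0,\dots,0)/|(\xi_1,\xi_2)|$, then $\widehat w(-\xi)=-\widehat w(\xi)\ne\overline{\widehat w(\xi)}$, so $w$ is not real. Two standard fixes: insert a factor $i$, taking $\pi(\xi)=i(-\xi_2,\xi_1,0,\dots,0)/|(\xi_1,\xi_2)|$ as the paper does, or keep $\pi$ real but make it even, e.g.\ $\pi(\xi)=\sign(\xi_1)(-\xi_2,\xi_1,0,\dots,0)/|(\xi_1,\xi_2)|$. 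Either choice is measurable, has unit modulus (so your radial lower bound on $\|e^{t\Delta}w\|$ survives verbatim), satisfies $\xi\cdot\pi(\xi)=0$ (so $w\in L^2_\sigma$), and gives $\widehat w(-\xi)=\overline{\widehat w(\xi)}$, i.e.\ $w$ real. With that choice made explicit, your argument is complete.
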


The second fact that we will establish is the following: 
almost all solutions (in a topological sense) 
of the unforced Navier-Stokes equations 
with $L^2$-algebraic
decay $O((1+t)^{-\alpha})$, with $0<\alpha<(n+2)/4$, 
do satisfy the two sided bounds
$(1+t)^{-\alpha}\lesssim 
\|\hspace{+0.020cm} \mbox{\boldmath $u$}(\cdot,t) \hspace{+0.020cm}
\|_{L^2(\RR^n)}\lesssim (1+t)^{-\alpha}$.
To make this rigorous, for $0<\alpha<(n+2)/4$, 
we introduce
the class $\mathcal{D}_{\hspace{-0.010cm}\alpha}$
of all Leray's solution of the unforced 
Navier--Stokes equations 
such that
\[
\|\hspace{+0.020cm}\boldsymbol u(\cdot,t)
\hspace{+0.020cm}\|_{L^2}\lesssim \hspace{+0.020cm}
(1+t)^{-\alpha}.
\]
Let us endow $\mathcal{D}_{\hspace{-0.010cm}\alpha}$ 
with a natural topology.
By Theorem~\ref{Inverse_Wiegner}, 
we know that 
$\boldsymbol u\in \mathcal{D}_{\hspace{-0.010cm}\alpha}$ 
implies 
$\|\hspace{+0.030cm}e^{t\Delta}\boldsymbol u_0
\hspace{+0.030cm}\|_{L^2}=O((1+t)^{-\alpha})$.
The Besov space theory~\cite[Chapter 2]{Chemin2011} 
then implies that 
$\boldsymbol u_0\in \dot B^{-2\alpha}_{2,\infty}(\RR^n)$.
So, we can endow $\mathcal{D}_{\hspace{-0.010cm}\alpha}$ 
with the initial topology induced by the map 
$I\hspace{+0.010cm}\colon \hspace{+0.040cm}
\mathcal{D}_\alpha\to\dot B^{-2\alpha}_{2,\infty}\cap L^2_\sigma(\RR^n)$, 
where $I(\boldsymbol u)=\boldsymbol u_0$, as before, 
and
the topology of $\dot B^{-2\alpha}_{2,\infty}\cap L^2_\sigma(\RR^n)$ 
is that induced by the norm 
$\boldsymbol u_0\mapsto\|\boldsymbol u_0\|_{\dot B^{-2\alpha}_{2,\infty}}+\|\boldsymbol u_0\|_{L^2_\sigma}$.

Then we have the following:
\begin{Theorem}
\label{th:genler2}
Let $0<\alpha<(n+2)/4$. 
The set of Leray's solutions of the unforced Navier-Stokes equations, 
such that the two sided estimates
$(1+t)^{-\alpha}\lesssim \hspace{+0.010cm}\|\boldsymbol u(\cdot,t)
\|_{L^2}\lesssim (1+t)^{-\alpha} \hspace{-0.020cm}$
do hold, 
is residual in the Baire space~$\mathcal{D}_{\hspace{-0.010cm}\alpha}$.
\end{Theorem}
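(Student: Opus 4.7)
The proof reduces to a residuality statement about initial data. Let $X:=\dot B^{-2\alpha}_{2,\infty}(\R^n)\cap L^2_\sigma(\R^n)$ equipped with the sum of the two norms; this is a Banach space, hence a Baire space. By Wiegner's Theorem~\ref{Wiegner_thm}, the map $I:\mathcal{D}_\alpha\to X$, $\boldsymbol u\mapsto \boldsymbol u_0$, is surjective, and the topology on $\mathcal{D}_\alpha$ is the initial topology induced by~$I$. A direct verification based on the surjectivity of $I$ shows that $\mathcal{D}_\alpha$ is itself a Baire space and that a subset $I^{-1}(E)\subset \mathcal{D}_\alpha$ is residual if and only if $E\subset X$ is residual. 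Combining Corollary~\ref{cor:tsb} with Theorem~\ref{th:char} (applied with exponent $2\alpha$ in place of $\alpha$), the set of Leray solutions satisfying the two-sided estimate equals $I^{-1}(\mathcal{G})$, where $\mathcal{G}:=\dot{\mathcal{A}}^{-2\alpha}_{2,\infty}\cap L^2_\sigma$. It therefore suffices to prove that $\mathcal{G}$ is residual in~$X$.

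I will show that the complement $\mathcal{B}:=X\setminus\mathcal{G}$ is closed with empty interior in $X$, hence nowhere dense and meager. By Theorem~\ref{th:char}(ii), $\mathcal{B}=\{\boldsymbol u_0\in X:\liminf_{\rho\to 0+}F_\rho(\boldsymbol u_0)=0\}$, where $F_\rho(\boldsymbol u_0):=\rho^{-4\alpha}\int_{|\xi|\le\rho}|\widehat{\boldsymbol u_0}(\xi)|^2\,\dd\xi$. Since $F_\rho^{1/2}$ is a seminorm on $X$ bounded by $\|\cdot\|_{\dot B^{-2\alpha}_{2,\infty}}$ \emph{uniformly} in~$\rho$, the functional $\boldsymbol u\mapsto \liminf_{\rho\to 0+}F_\rho(\boldsymbol u)^{1/2}$ is Lipschitz continuous on~$X$, so $\mathcal{B}$ is closed.

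The main difficulty lies in showing that $\mathcal{B}$ has empty interior. The naive approach of perturbing $\boldsymbol u_0\in\mathcal{B}$ by a small multiple of a \emph{fixed} good element $\boldsymbol v_0$ (with $\inf_\rho F_\rho(\boldsymbol v_0)>0$) fails: the cross term $2\operatorname{Re}\int_{|\xi|\le\rho}\widehat{\boldsymbol u_0}\cdot\overline{\widehat{\boldsymbol v_0}}\,\dd\xi$ in the expansion of $F_\rho(\boldsymbol u_0+\boldsymbol v_0)$ can be negative and of comparable size at certain scales, wiping out the contribution of $\boldsymbol v_0$. The remedy is a $\boldsymbol u_0$-dependent perturbation with a Fourier \emph{phase-alignment}. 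Fix a smooth cut-off $\chi\in C_c^\infty([0,\infty))$ with $\chi\equiv 1$ on $[0,1/2]$ and $\operatorname{supp}\chi\subset[0,1]$, and for $\delta>0$ set
\[
\widehat{\boldsymbol w}(\xi):=\delta\,|\xi|^{2\alpha-n/2}\chi(|\xi|)\,\boldsymbol e(\xi),
\]
where $\boldsymbol e(\xi)\in\xi^\perp\subset\CC^n$ is a Borel selection of unit vectors satisfying $\boldsymbol e(-\xi)=\overline{\boldsymbol e(\xi)}$ (so that $\boldsymbol w$ is real) and equal to $\widehat{\boldsymbol u_0}(\xi)/|\widehat{\boldsymbol u_0}(\xi)|$ wherever $\widehat{\boldsymbol u_0}(\xi)\ne 0$. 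Since $0<\alpha<(n+2)/4$, the integral $\int_0^1 r^{4\alpha-1}\,\dd r$ is finite, so $\boldsymbol w\in X\cap L^2_\sigma$ with $\|\boldsymbol w\|_X\le C(n,\alpha)\,\delta$, and an explicit computation yields $F_\rho(\boldsymbol w)=c_1(n,\alpha)\,\delta^2$ for every $\rho\in(0,1/2]$. The phase alignment makes $\widehat{\boldsymbol u_0}(\xi)\cdot\overline{\widehat{\boldsymbol w}(\xi)}=\delta\,|\xi|^{2\alpha-n/2}\chi(|\xi|)\,|\widehat{\boldsymbol u_0}(\xi)|\ge 0$, so the cross term in $F_\rho(\boldsymbol u_0+\boldsymbol w)$ is nonnegative for \emph{every} $\rho>0$. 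Consequently $F_\rho(\boldsymbol u_0+\boldsymbol w)\ge F_\rho(\boldsymbol w)\ge c_1\delta^2$ on $(0,1/2]$, giving $\liminf_{\rho\to0+}F_\rho(\boldsymbol u_0+\boldsymbol w)>0$ and hence $\boldsymbol u_0+\boldsymbol w\notin \mathcal{B}$. For any prescribed $\varepsilon>0$, picking $\delta<\varepsilon/C(n,\alpha)$ places $\boldsymbol u_0+\boldsymbol w$ within the $\varepsilon$-ball around $\boldsymbol u_0$, completing the proof that $\mathcal{B}$ has empty interior.
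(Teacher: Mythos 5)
Your proof is correct, and its reduction skeleton coincides with the paper's: both use the surjectivity of the solution-to-datum map $I$ (via Wiegner's theorem), the initial-topology lemma (the paper's Lemma~\ref{lem:baire}), and Corollary~\ref{cor:tsb} combined with Theorem~\ref{th:char} at exponent $2\alpha$, to reduce the theorem to showing that $\dot{\mathcal{A}}^{-2\alpha}_{2,\infty}\cap L^2_\sigma$ is residual in $X=\dot B^{-2\alpha}_{2,\infty}\cap L^2_\sigma(\RR^n)$. Where you genuinely diverge is in the proof of this last fact (the paper's Proposition~\ref{prop:residual}). The paper exhibits an open dense subset $V_\alpha$ defined by a uniform lower bound on Littlewood--Paley blocks, $\|\Delta_j \boldsymbol u_0\|\ge\delta 2^{2\alpha j}$ for all $j\le j_0$: openness comes from the equivalent norms $\|\cdot\|_{\alpha,j_0}$, and density from a block-by-block surgery that \emph{replaces} the deficient blocks by explicit divergence-free functions of the correct size --- replacement, rather than addition, being the paper's way of avoiding cancellations. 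You instead work with the ball-averaged functional $F_\rho$ and prove the stronger statement that the good set is itself open and dense: closedness of the complement follows from the Lipschitz continuity of $\boldsymbol u\mapsto\liminf_{\rho\to0+}F_\rho(\boldsymbol u)^{1/2}$ (correct, since $F_\rho(\cdot)^{1/2}$ is a seminorm bounded by $C\|\cdot\|_{\dot B^{-2\alpha}_{2,\infty}}$ uniformly in $\rho$), and density from an \emph{additive} low-frequency perturbation whose Fourier phase is aligned with that of $\widehat{\boldsymbol u}_0$, which makes the cross term pointwise nonnegative and removes the cancellation obstruction. Both devices are sound; yours buys a marginally stronger conclusion (the favourable set of data, hence of solutions, is open and dense rather than merely residual) and dispenses with the equivalent-norm bookkeeping, while the paper's block formulation ties directly to the definition~\eqref{eq:lowa} of $\dot{\mathcal{A}}^{-2\alpha}_{2,\infty}$ and to its decay-character interpretation. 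Two points you should make explicit, though both are harmless: (a) identifying the bad set with $\{\liminf_{\rho\to0+}F_\rho=0\}$ uses that the $\limsup$ condition in Theorem~\ref{th:char}(ii) is automatic for elements of $\dot B^{-2\alpha}_{2,\infty}$, as shown in the paper's proof of (iii)$\Rightarrow$(ii); (b) the choice $\boldsymbol e(\xi)=\widehat{\boldsymbol u}_0(\xi)/|\widehat{\boldsymbol u}_0(\xi)|$ on $\{\widehat{\boldsymbol u}_0\ne0\}$ is admissible because the divergence-free condition and the reality of $\boldsymbol u_0$ give $\xi\cdot\boldsymbol e(\xi)=0$ and $\boldsymbol e(-\xi)=\overline{\boldsymbol e(\xi)}$ automatically.
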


The above theorems rely on the two propositions below
and on a simple lemma in general topology which we now state.

\begin{Lemma}
\label{lem:baire}
Let $I\colon X\to Y$ be a surjective map from a set $X$ to a topological space~$Y$.
Endow $X$ with the initial topology of the map $I$. 
Then:
\begin{itemize}
\item[-] If $Y$ is a Baire space, then $X$ is a Baire space.
\item[-] If $B\subset Y$ is residual (resp. meager) in $Y$, then $I^{-1}(B)$ is residual (resp. meager) in $X$.
\end{itemize}

\end{Lemma}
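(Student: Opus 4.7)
The plan is to exploit the tight correspondence between the open subsets of $Y$ and those of $X$ provided by the initial topology. By definition every open subset of $X$ has the form $I^{-1}(U)$ with $U\subset Y$ open, and the surjectivity of $I$ yields $I(I^{-1}(A))=A$ for every $A\subset Y$. Hence $U\mapsto I^{-1}(U)$ is a bijection between the open subsets of $Y$ and the open subsets of $X$, and similarly for closed subsets. The first step I would carry out is to derive from this the two elementary identities
\begin{equation*}
\mathrm{int}_X(I^{-1}(D)) \,=\, I^{-1}(\mathrm{int}_Y(D)), \qquad \overline{A}^{\,X} \,=\, I^{-1}\bigl(\overline{I(A)}^{\,Y}\bigr),
\end{equation*}
valid for every $D\subset Y$ and $A\subset X$, each a direct consequence of the open/closed correspondence and the surjectivity of $I$.

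The central consequence of these identities, and the heart of the proof, is the equivalence: a set $N\subset X$ is nowhere dense in $X$ if and only if $I(N)$ is nowhere dense in $Y$; and a set $N\subset Y$ is nowhere dense in $Y$ if and only if $I^{-1}(N)$ is nowhere dense in $X$. Indeed $\mathrm{int}_X(\overline{N}^{\,X})=I^{-1}(\mathrm{int}_Y(\overline{I(N)}^{\,Y}))$, and surjectivity of $I$ forces this preimage to be empty precisely when $\mathrm{int}_Y(\overline{I(N)}^{\,Y})$ is empty; the symmetric statement for $I^{-1}(N)$ follows in the same way using $I(I^{-1}(N))=N$.

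Granted the equivalence, both bullets of the lemma are immediate. For the second, if $B\subset \bigcup_n N_n$ with the $N_n$ nowhere dense in $Y$ (meager case), then $I^{-1}(B)\subset \bigcup_n I^{-1}(N_n)$ is a countable union of nowhere dense subsets of $X$; the residual case then follows by complementation, since $I^{-1}$ commutes with complements. For the first, assuming $Y$ is Baire, let $\bigcup_n N_n$ be any meager subset of $X$; each $I(N_n)$ is nowhere dense in $Y$, so $\bigcup_n I(N_n)$ has empty interior in $Y$; if $\bigcup_n N_n$ contained a nonempty open set $I^{-1}(V)$ of $X$, surjectivity would give $V=I(I^{-1}(V))\subset \bigcup_n I(N_n)$, producing a nonempty open set contained in a meager subset of the Baire space~$Y$, a contradiction. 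I do not anticipate any substantial obstacle here: the proof is a careful but essentially routine bookkeeping exercise, and the only point requiring attention is the consistent use of surjectivity to transfer emptiness statements, interiors and closures faithfully between $X$ and $Y$.
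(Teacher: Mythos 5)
Your proof is correct, and it reaches the conclusion by a dual route to the paper's. Both arguments hinge on the same core fact — surjectivity makes $U\mapsto I^{-1}(U)$ a bijection between the open sets of $Y$ and those of $X$, with $I^{-1}(S)=\emptyset$ iff $S=\emptyset$ — but the paper stays entirely on the ``dense open'' side: it isolates the single equivalence (OD) ($I^{-1}(V)$ is open and dense in $X$ iff $V$ is open and dense in $Y$), proves Baireness by checking directly that a countable intersection of dense open subsets of $X$ meets every nonempty open set, handles the residual case via $I^{-1}\bigl(\bigcap_n U_n\bigr)=\bigcap_n I^{-1}(U_n)$, and only gets meagerness at the end by complementation. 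You work instead on the ``nowhere dense / meager'' side: you first prove the transfer identities $\mathrm{int}_X(I^{-1}(D))=I^{-1}(\mathrm{int}_Y(D))$ and $\overline{A}^{\,X}=I^{-1}\bigl(\overline{I(A)}^{\,Y}\bigr)$, deduce that nowhere-density passes both forwards (through images) and backwards (through preimages) along $I$, treat the meager case first with residual obtained by complementation, and verify Baireness using the characterization that meager sets have empty interior — which matches the definition the paper states in its introduction, even though its own proof of the lemma uses the dense-intersection formulation. Your route costs a little more machinery up front (the closure identity and forward images, which the paper never needs), but it buys a stronger, symmetric statement: nowhere-density and meagerness transfer in both directions along $I$, not merely under preimages, and all the claimed conclusions then fall out immediately. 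Both arguments use nothing beyond surjectivity and the definition of the initial topology, so neither is more general in its hypotheses; the paper's is the more economical, yours the more systematic.
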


\begin{proof}
The open sets of $X$ are, by definition, the sets of the form
$I^{-1}(U)$, where $U$ is open in $Y$.

Now, let $V\subset Y$ and $A=I^{-1}(V)$.
Let us first observe the following simple fact:
\[
\text{
$A$ open and dense in $X$ $\iff$  
$V$ open and dense in $Y$}.
\tag{OD}
\]

Indeed,
if $V$ is an open dense set in $Y$ and $A$ were not dense, 
then there would exist a non-empty open set $W$ of $X$ 
such that $A\cap W$ is empty. But $W=I^{-1}(U)$ 
for some open set $U$ of $Y$. 
Therefore, $I^{-1}(V\cap U)=A\cap W=\emptyset$.
But $I$ is surjective, hence $V\cap U=\emptyset$ 
and so 
$U = \emptyset$ because $V$ is dense. 
Thus,
$W=\emptyset$ and we get a contradiction.

Conversely, if $A$ is open and dense in $X$, 
then there exists an open set $U$ in $Y$ such that 
$I^{-1}(U)=A=I^{-1}(V)$. 
By surjectivity, $U=V$ and so $V$ is open.  
But $V$ must be dense in $Y$, otherwise there would exist
$W\not=\emptyset$ open in $Y$ such that $V\cap W$ is empty. 
Hence, by the surjectivity, $\emptyset=I^{-1}(V\cap W)=A\cap I^{-1}(W)$. 
This is a contradiction because
$I^{-1}(W)$ is open, non-empty, and $A$ is dense in~$X$.

For proving the first assertion of the lemma, 
let $(A_n)$ be a countable collection
of open dense sets in $X$. 
Let us prove that $\bigcap_n A_n$ is dense in~$X$.
Indeed, let 
$W\not=\emptyset$ be an open set in~$X$.
We have $A_n=I^{-1}(V_n)$
where  $V_n$ is open and dense in $Y$ by (OD). 
Moreover
$W=I^{-1}(V)$ with $V$ open and non-empty in $Y$. 
So, $W\cap(\bigcap_n A_n)=
I^{-1}(V\cap(\bigcap_n V_n))$. But $V\cap(\bigcap_n V_n)\not=\emptyset$
by the assumption that $Y$ is a Baire space (and so $\bigcap_n V_n$ is dense
in $Y$). Moreover, $I$ is surjective, hence $W\cap(\bigcap_n A_n)\not=\emptyset$.
This proves that $\bigcap_n A_n$ is dense in $X$ and so $X$ is a Baire space.
    
Let us prove the second assertion of the lemma. If $B$ is residual in $Y$, then
$B$ contains a countable intersection $\bigcap U_n$ of open and dense sets $U_n$
in $Y$.
Then $I^{-1}(B)$ contains $\bigcap_n I^{-1}(U_n)$, which is a countable intersection
of open dense sets of $X$ by (OD). So $I^{-1}(B)$ is residual in~$X$. The conclusion for meager sets is obtained passing to the complementary.
\end{proof}

\begin{Proposition}
\label{prop:meag}
The subspace of $L^2_{\sigma}(\R^n)$-initial data, such that there
exists a Leray solution of the unforced Navier--Stokes equations with
an algebraic decay of the energy, is a meager set in $L^2_{\sigma}(\RR^n)$.
\end{Proposition}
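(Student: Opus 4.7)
The proposal rests on Theorem~\ref{Inverse_Wiegner} combined with a standard Baire-category argument controlling the low-frequency behaviour of the initial datum. First, I would reduce the problem to a statement about the heat flow alone. Indeed, if some Leray solution $\boldsymbol u$ arising from $u_0\in L^2_\sigma(\R^n)$ satisfies $\|\boldsymbol u(\cdot,t)\|_{L^2}^2=O((1+t)^{-\alpha})$ for some $\alpha>0$, then Theorem~\ref{Inverse_Wiegner} applied with $\boldsymbol f\equiv\mathbf 0$ and decay exponent $\min\{\alpha/2,(n+2)/4\}$ yields $\|e^{t\Delta}u_0\|_{L^2}^2=O((1+t)^{-\beta})$ for some $\beta>0$. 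It is therefore enough to show that the set
\[
E:=\bigl\{u_0\in L^2_\sigma(\R^n):\;\exists\beta>0,\;\|e^{t\Delta}u_0\|_{L^2}^2=O((1+t)^{-\beta})\bigr\}
\]
is meager in $L^2_\sigma(\R^n)$.

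Next, I would write $E=\bigcup_{k,m\in\N}A_{k,m}$, where
\[
A_{k,m}:=\bigl\{u_0\in L^2_\sigma:\;\|e^{t\Delta}u_0\|_{L^2}^2\le m(1+t)^{-1/k}\text{ for every } t\ge0\bigr\}.
\]
Each $A_{k,m}$ is closed in $L^2_\sigma$: if $u_0^{(j)}\to u_0$ in $L^2$, then $e^{t\Delta}u_0^{(j)}\to e^{t\Delta}u_0$ in $L^2$ for every fixed $t$, and the defining inequality passes to the limit. Thus meagerness of $E$ reduces to showing that each $A_{k,m}$ has empty interior in $L^2_\sigma(\R^n)$.

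The latter is the heart of the argument. Given $u_0\in A_{k,m}$ and $\varepsilon>0$, I construct $\psi\in L^2_\sigma$ with $\|\psi\|_{L^2}<\varepsilon$ and $u_0+\psi\notin A_{k,m}$ by concentrating a small amount of mass at very low frequencies with a calibrated singularity. Concretely, I would take $\psi=\mathbb{P}_H(\phi\,e_1)$, with $e_1$ the first canonical basis vector and $\widehat\phi$ radial, supported in the unit ball, and satisfying $|\widehat\phi(\xi)|^2\simeq|\xi|^{-n+\delta}$ for some $0<\delta<2/k$, rescaled so that $\|\psi\|_{L^2}<\varepsilon$. The condition $\delta>0$ ensures $\phi\in L^2$, and the Leray projector reduces $|\widehat\psi|^2$ only by the non-degenerate factor $1-\xi_1^2/|\xi|^2$. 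Plancherel together with the rescaling $\eta=\sqrt{2t}\,\xi$ yields the asymptotics $\|e^{t\Delta}\psi\|_{L^2}^2\sim c\,t^{-\delta/2}$ as $t\to\infty$. A reverse triangle inequality combined with $u_0\in A_{k,m}$ then gives $\|e^{t\Delta}(u_0+\psi)\|_{L^2}^2\gtrsim t^{-\delta/2}$ for large $t$; since $\delta/2<1/k$, this contradicts the defining inequality of $A_{k,m}$.

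The main obstacle is precisely this perturbation construction: one must simultaneously achieve smallness in $L^2$, divergence-freeness, and a precise low-frequency asymptotic profile slower than $(1+t)^{-1/(2k)}$. The power-type singular ansatz $|\widehat\phi|^2\simeq|\xi|^{-n+\delta}$ is natural because the $L^2$-norm is controlled by the exponent $\delta>0$, the heat-flow decay rate is determined by $\delta$ via an elementary scaling argument, and the Leray projection preserves the radial-weight structure. Once this is carried out, $E$ is exhibited as a countable union of nowhere-dense sets, and since the set of $u_0$ appearing in the statement of Proposition~\ref{prop:meag} is contained in $E$, the conclusion follows.
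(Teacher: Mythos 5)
Your proposal is correct, and it shares the paper's overall architecture — reduce to the heat flow via Theorem~\ref{Inverse_Wiegner}, write the resulting set as a countable union of closed sets, and kill the interior of each by adding a small divergence-free perturbation whose low frequencies are too singular, concluding with a reverse triangle inequality — but the implementation differs at two points. First, your closed sets $A_{k,m}$ are defined directly by semigroup bounds $\|e^{t\Delta}u_0\|^2\le m(1+t)^{-1/k}$, closed because $e^{t\Delta}$ is an $L^2$-contraction, and your union over $k,m\in\N$ is countable by construction; the paper instead passes to the Fourier side, using the equivalence (established inside the proof of Theorem~\ref{th:char}) between $\|e^{t\Delta}\boldsymbol u_0\|^2\lesssim(1+t)^{-\alpha}$ and $\limsup_{\rho\to0+}\rho^{-2\alpha}\int_{|\xi|\le\rho}|\widehat{\boldsymbol u}_0|^2\dd\xi<\infty$, and works with the sets $A_{\alpha,\rho_0,K}=\bigcap_{0<\rho\le\rho_0}\{\boldsymbol u_0:\int_{|\xi|\le\rho}|\widehat{\boldsymbol u}_0|^2\le K\rho^{2\alpha}\}$, whose closedness and countable covering are checked by static Plancherel-type arguments. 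Second, your perturbation has a power-type spectral profile $|\widehat\phi(\xi)|^2\simeq|\xi|^{-n+\delta}$ with $0<\delta<2/k$, made divergence-free by the Leray projector (your observation that the projection only costs the factor $1-\xi_1^2/|\xi|^2$, harmless after spherical averaging, is the right one), and it must be re-tuned for each $k$; the paper uses a single logarithmic profile, $\widehat{\boldsymbol v}_0(\xi)=(-i\xi_2,i\xi_1,0,\ldots,0)|\xi|^{-1-n/2}(\log|\xi|)^{-1}$, which is divergence-free by inspection ($\xi\cdot\widehat{\boldsymbol v}_0=0$, no projection needed) and whose low-frequency mass $\approx|\log\rho|^{-1}$ defeats \emph{every} algebraic rate simultaneously. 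What your route buys is self-containedness — you never need the Fourier characterization of Theorem~\ref{th:char}, only the scaling asymptotics $\|e^{t\Delta}\psi\|^2\sim ct^{-\delta/2}$ — at the price of a per-$k$ construction and a dynamic (large-$t$) verification in place of the paper's static low-frequency estimate; both are complete proofs.
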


\begin{proof}
Indeed, if $\boldsymbol u_0\in L^2_\sigma(\R^n)$ is such that there
exists a Leray solution of the unforced Navier--Stokes equations with
an algebraic decay of the energy, then, by the inverse Wiegner theorem,
we must have $\|e^{t\Delta}\boldsymbol u_0\|^2\lesssim (1+t)^{-\alpha}$,
for some $\alpha>0$.
But,
\[
\limsup_{\rho\to0+} \rho^{-2\alpha}
 \int_{|\xi|\le \rho}|\widehat{\boldsymbol u}_0(\xi)|^2\dd\xi<\infty
\iff
\|e^{t\Delta}\boldsymbol u_0\|^2 \lesssim (1+t)^{-\alpha}.
\]
The implication ``$\Rightarrow$'' was established in the first part of the proof of  Theorem~\ref{th:char} and the
implication ``$\Leftarrow$'' is obvious, since
$\int_{|\xi|\le \rho} |\widehat{\boldsymbol u}_0(\xi)|^2 
\lesssim  \|e^{\Delta/\rho^2}\boldsymbol u_0\|^2\lesssim \rho^{2\alpha}$.

For any $\alpha,\rho,K>0$,
let us consider the closed sets of $L^2_\sigma$
\[
F_{\alpha,\rho,K}
=\bigl\{\boldsymbol u_0\in L^2_{\sigma}\colon 
\int_{|\xi|\le \rho}|\widehat{\boldsymbol u}_0|^2\le K\rho^{2\alpha}\bigr\}
\quad\text{and}\quad
A_{\alpha,\rho_0,K}=\bigcap_{0<\rho\le \rho_0} F_{\sigma,\rho,K}.
\]
Let us show that $A_{\alpha,\rho_0,K}$ has an empty interior in $L^2_\sigma$.
Indeed, let $\boldsymbol v_0\in L^2_\sigma$, given by
\[
\widehat{\boldsymbol v}_0(\xi)=(-i\xi_2,i\xi_1,0,\ldots,0) \,|\xi|^{-1-n/2}(\log|\xi|)^{-1},
\]
if $|\xi|\le 1/2$ and $\widehat{\boldsymbol v}_0(\xi)=0$ otherwise.
Here $i$ is the imaginary unit.
The divergence-free condition is ensured by the fact that $\xi\cdot\widehat{\boldsymbol v}_0(\xi)=0$.
Then, for all $0<\rho\le 1/2$, 
we have
$\int_{|\xi|\le \rho} |\widehat{\boldsymbol v}_0|^2\dd \xi\approx 
\int_0^\rho r^{-1}(\log r)^{-2}\dd r\approx |\log\rho|^{-1}$.
Now, let $\boldsymbol u_0\in A_{\alpha,\rho_0,K}$ and $\epsilon>0$.
Then $\boldsymbol u_0+\epsilon\boldsymbol v_0\not\in A_{\alpha,\rho_0,K}$.
Indeed, for some $\rho_1$, with $0<\rho_1\le \rho_0$ and all $0<\rho\le \rho_1$:
\[
\begin{split}
\bigl(\int_{|\xi|\le \rho} |\widehat{\boldsymbol u_0\!+\!\epsilon \boldsymbol v_0}|^2(\xi)\Bigr)^{1/2}
&\ge
\Bigl|\epsilon \Bigl(\int_{|\xi|\le \rho}|\widehat{\boldsymbol v}_0(\xi)|^2\dd \xi\Bigr)^{1/2}-\Bigl(\int_{|\xi|\le\rho}|\widehat{\boldsymbol u}_0(\xi)|^2\dd \xi\Bigr)^{1/2}\Bigr|\\
&\gtrsim \Bigl|\epsilon|\log\rho|^{-1/2}-\sqrt K\rho^\alpha\Bigr|\\
&\gtrsim \epsilon|\log\rho|^{-1/2}.
\end{split}
\]
Therefore,
\[
\rho^{-2\alpha}\int_{|\xi|\le \rho}|\widehat{\boldsymbol u_0\!+\!\epsilon\boldsymbol v_0}|^2(\xi)\dd \xi
\to+\infty\qquad\text{as $\rho\to0+$},
\]
which proves that $\boldsymbol u_0+\epsilon\boldsymbol v_0\not\in A_{\alpha,\rho_0,K}$.

\medskip
The set of initial data giving rise to Leray's solutions of the Navier--Stokes equations with algebraic decay, which can be characterized as
\begin{equation}
\label{eq:setA}
A=\bigl\{ \boldsymbol u_0\in L^2_\sigma\colon \exists\alpha>0,
\;\textstyle\limsup_{\rho\to0+}\rho^{-2\alpha}\int_{|\xi|\le \rho}|\widehat{\boldsymbol u}_0(\xi)|^2\dd\xi<+\infty\bigr\},
\end{equation}
can be written as 
\[
A=\bigcup_{\alpha,\rho_0,K>0} A_{\alpha,\rho_0,K}.
\]
By the previous discussion, $A_{\alpha,\rho_0,K}$ is a closed set with empty interior in $L^2_\sigma$.
But a countable union of these sets is enough to cover $A$, so $A$ is meager
in~$L^2_\sigma$.
\end{proof}

Our next proposition shows that, for $\alpha>0$,  initial data in
$\dot B^{-2\alpha}_{2,\infty}\cap L^2_\sigma(\RR^n)$
generically do belong to $\dot{\mathcal{A}}^{-2\alpha}_{2,\infty}\cap L^2_\sigma(\R^n)$.
To this purpose, we endow $\dot B^{-2\alpha}_{2,\infty}\cap L^2_\sigma(\RR^n)$ with the natural Banach norm
\[
w\mapsto\|w\|_{\dot B^{-2\alpha}_{2,\infty}(\RR^n)}+\|w\|_{L^2(\RR^n)}.
\]

\begin{Proposition}
\label{prop:residual}
The set
$\dot{\mathcal{A}}^{-2\alpha}_{2,\infty}\cap L^2_\sigma(\R^n)$
is residual in $\dot B^{-2\alpha}_{2,\infty}\cap L^2_\sigma(\R^n)$.
\end{Proposition}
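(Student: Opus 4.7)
My plan is to exhibit a dense open subset $G$ of the Banach space $X := \dot B^{-2\alpha}_{2,\infty} \cap L^2_\sigma(\R^n)$ that is contained in $\dot{\mathcal{A}}^{-2\alpha}_{2,\infty} \cap L^2_\sigma$; since any superset of a residual set is residual, this will establish the claim. A convenient choice is
\[
G := \Bigl\{f \in X : \liminf_{k \to \infty} 2^{6\alpha k}\,\|\Delta_{-3k} f\|_{L^2} > 0 \Bigr\},
\]
where the factor $3$ ensures that the Littlewood--Paley symbols $\varphi(\cdot/2^{-3k})$, $k\in\N$, have pairwise disjoint supports (since $\varphi$ is supported in $[3/4,8/3]$). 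The inclusion $G \subset \dot{\mathcal{A}}^{-2\alpha}_{2,\infty}$ amounts to verifying~\eqref{eq:lowa}: if $f \in G$ satisfies $\|\Delta_{-3k} f\|_{L^2} \ge c\,2^{-6\alpha k}$ for all $k \ge K$, one takes $M := 3K$ and $j_k := -(k+1)M \in [-(k+1)M,-kM]$, which is of the form $-3\ell$ with $\ell \ge K$, so that $\|\Delta_{j_k} f\|_{L^2} \ge c\,2^{2\alpha j_k}$. Openness of $G$ in $X$ follows from the reverse triangle inequality in $\dot B^{-2\alpha}_{2,\infty}$, since a perturbation of $\dot B^{-2\alpha}_{2,\infty}$-norm smaller than $c/2$ preserves the lower bound $c/2$ on the same tail of $k$'s.

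The heart of the argument is the density of $G$. I plan to fix a \emph{saturating} divergence-free profile $\boldsymbol w \in X$ with $\|\Delta_j \boldsymbol w\|_{L^2} \asymp 2^{2\alpha j}$ for $j \le 0$, built in Fourier as $\hat{\boldsymbol w}(\xi) = \phi(|\xi|)\,V(\xi)$ where $V$ is a smooth divergence-free unit vector field (for instance $(-\xi_2,\xi_1,0,\dots,0)/|\xi|$) and $\phi(\rho) \asymp \rho^{2\alpha-n/2}$ on $(0,1]$, extended by zero; the assumption $\alpha>0$ makes $\boldsymbol w \in L^2$. Given $f_0\in X$ and $r>0$, I will set $g := f_0 + h$ with
\[
h := \varepsilon \sum_{k \ge 1} \sigma_k\,\Delta_{-3k}\boldsymbol w,
\]
where $\varepsilon>0$ is small and signs $\sigma_k \in \{\pm 1\}$ are chosen scale by scale so that
\[
\|\Delta_{-3k}(f_0 + \sigma_k\varepsilon\,\Delta_{-3k}\boldsymbol w)\|_{L^2}^2 \;\ge\; \|\Delta_{-3k} f_0\|_{L^2}^2 + \varepsilon^2 \|\Delta_{-3k}^2\boldsymbol w\|_{L^2}^2.
\]
Such a sign always exists because averaging over $\sigma_k = \pm 1$ reproduces the right-hand side (parallelogram identity). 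By the disjointness of Littlewood--Paley symbols at spacing~$3$, $\Delta_{-3k} h = \varepsilon\,\sigma_k\,\Delta_{-3k}^2\boldsymbol w$, so the scale-by-scale sign estimate transfers directly to $\Delta_{-3k} g$. Routine estimates yield $\|h\|_{\dot B^{-2\alpha}_{2,\infty}} \lesssim \varepsilon$ and $\|h\|_{L^2}^2 \lesssim \varepsilon^2 \sum_{k\ge 1} 2^{-6\alpha k} < \infty$, so $\|h\|_X < r$ for $\varepsilon$ small enough; the sign selection then forces $2^{6\alpha k}\|\Delta_{-3k}g\|_{L^2} \gtrsim \varepsilon$ for every $k \ge 1$, placing $g \in G \cap B(f_0,r)$.

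The main subtlety I anticipate is merely verifying the asymptotic estimates on the explicit $\boldsymbol w$; crucially, the parallelogram trick bypasses any need to arrange a deterministic $L^2$-orthogonality between $\Delta_{-3k}f_0$ and $\boldsymbol w$, which would otherwise be delicate in dimension $n=2$, where the divergence-free subspace is one-dimensional in Fourier.
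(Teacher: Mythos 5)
Your proof is correct, and it shares the paper's overall blueprint: exhibit a dense open subset of $\dot B^{-2\alpha}_{2,\infty}\cap L^2_\sigma(\R^n)$ contained in $\dot{\mathcal{A}}^{-2\alpha}_{2,\infty}\cap L^2_\sigma(\R^n)$, built from explicit divergence-free profiles supported in dyadic Fourier annuli. The implementation of the density step, however, is genuinely different. The paper works with the set $V_\alpha$ of data whose blocks are saturated at \emph{every} scale $j\le j_0$, proves openness via the equivalent norms $\|\cdot\|_{\alpha,j_0}$ of \eqref{eqnorm}, and gets density by \emph{replacing} each deficient block $\Delta_j\boldsymbol u_0$ with an explicit bump --- which in principle requires checking that the inserted bump at scale $j$ is not cancelled, under $\Delta_j$, by the retained neighbouring blocks at scales $j\pm1$. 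You instead require saturation only along the subsequence $j=-3k$, which is exactly what \eqref{eq:lowa} demands (your choice $M=3K$, $j_k=-(k+1)M$ is valid), you \emph{add} bumps at every third scale so that their spectra are pairwise disjoint, and you choose the sign of each bump by the parallelogram identity so that the perturbation can never cancel against $\Delta_{-3k}f_0$. This sign-choice trick is the real gain of your route: interference is excluded by design rather than by inspection of Fourier supports, and openness of $G$ follows from the plain reverse triangle inequality, with no need for the auxiliary norms. What the paper's version buys is the stronger conclusion that a dense open set of data is saturated at \emph{all} low frequencies, not merely along an arithmetic subsequence. Two cosmetic repairs: insert factors of $i$, i.e.\ take $(-i\xi_2,i\xi_1,0,\dots,0)/|\xi|$ as the paper does, so that $\boldsymbol w$ is real-valued (your field is also neither smooth nor of unit length on $\{\xi_1=\xi_2=0\}$ when $n\ge3$, which is harmless since only $|V|\le1$ and the positivity of its annular $L^2$-mass are used); and the $L^2$ summand should read $2^{-12\alpha k}$ rather than $2^{-6\alpha k}$ --- both series converge, so nothing changes.
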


\begin{proof}
For any $j_0\in\Z$, a norm in $\dot B^{-2\alpha}_{2,\infty}\cap L^2_\sigma(\RR^n)$ equivalent to the previous natural norm is
\begin{equation}
\label{eqnorm}
\boldsymbol w\mapsto\|\boldsymbol w\|_{\alpha,j_0}:= \sup_{j\le j_0} \Bigl( 2^{-2\alpha j}\|\Delta_j \boldsymbol w\| \Bigr) 
+ \Bigl(\sum_{j>j_0}^\infty \|\Delta_j \boldsymbol w\|^2\Bigr)^{1/2}.
\end{equation}

Let
\begin{multline}
\label{eq:V}
V_\alpha=\bigl\{\boldsymbol u_0\in \dot B^{-2\alpha}_{2,\infty}\cap L^2_\sigma(\RR^n)\colon \\
\exists \delta>0, \exists j_0\in \ZZ \text{ such that, for all $j\le j_0$, }
\|\Delta_j \boldsymbol u_0\| \ge \delta 2^{2\alpha j} \bigr\}.
\end{multline}
Notice that $V_\alpha\subset\dot{\mathcal{A}}^{-2\alpha}_{2,\infty}\cap L^2_\sigma(\R^n)$, so it is enough to prove that $V_\alpha$ is a dense open
set in $B^{-2\alpha}_{2,\infty}\cap L^2_\sigma(\RR^n)$.

Let $\boldsymbol u_0\in V_\alpha$, $\delta>0$ and $j_0\in \ZZ$ such that
 $\|\Delta_j \boldsymbol u_0\| \ge \delta 2^{2\alpha j}$ for all $j\le j_0$.
Then the  $\|\cdot\|_{\alpha,j_0}$-ball of $\dot B^{-2\alpha}_{2,\infty}\cap L^2_\sigma(\RR^n)$ centered at $\boldsymbol u_0$ with radius $\delta/2$ is contained in
$V_\alpha$. So $V_\alpha$ is open in $B^{-2\alpha}_{2,\infty}\cap L^2_\sigma(\RR^n)$.

If $\boldsymbol u_0\in \dot B^{-2\alpha}_{2,\infty}\cap L^2_\sigma(\RR^n)$ and $\epsilon>0$, then we define,
for an arbitrary choice of $j_0\in \ZZ$ and any $j\in\ZZ$, 
\[
\boldsymbol w_j=
\left\{
\begin{aligned}
&\Delta_j \boldsymbol u_0, \qquad\qquad\qquad\qquad\,\text{for all $j>j_0$ or ($j\le j_0$ and $2^{-2\alpha j}\|\Delta_j u_0\|\ge \epsilon$)}\\
&\epsilon\, 2^{2\alpha j-nj/2}
\mathcal{F}^{-1}\bigl(\textstyle\frac{(-i\xi_2,i\xi_1,0,\ldots,0)}{|\xi|}{\bf 1}_{\{ 2^j\le |\xi|\le 2^{j+1}\}}\bigr)  \qquad\qquad\qquad\qquad\text{otherwise},
\end{aligned}
\right.
\]
where
$\mathcal{F}$ is the Fourier transform and ${\bf 1}_X$ the indicator function of the set~$X$.
We have $\sup_{j\in\Z}2^{-2\alpha j}\|\boldsymbol w_j\|<\infty$ and $\mathcal{F}\boldsymbol w_j$
is supported in a dyadic annulus of radii $\approx 2^j$,
Hence, $\boldsymbol w:=\sum_{j\in\ZZ} \boldsymbol w_j$ belongs to $\dot B^{-2\alpha}_{2,\infty}(\R^n)$. (This follows from 
\cite[Lemma~2.23 and Remark~2.24]{Chemin2011}).
Moreover, all these functions $\boldsymbol w_j$ are divergence free.
So one also easily checks that $\boldsymbol w\in L^2_\sigma(\R^n)$.

Moreover, for some $c>0$ and for all $j\le j_0$, we have 
$2^{-2\alpha j} \|\Delta_j \boldsymbol w\|\ge c\,\epsilon$. So, in fact, $w\in V_\alpha$.
But, for all integer $j\le j_0$, we have $2^{-2\alpha j}\|\Delta_j(\boldsymbol u_0-\boldsymbol w)\|\le 2\epsilon$, which proves that the $\|\cdot\|_{\alpha,j_0}$-distance
between $u_0$ and $w$,
is less than $2\epsilon$.
Hence, $V_\alpha$ is dense in $B^{-2\alpha}_{2,\infty}\cap L^2_\sigma(\R^n)$.
\end{proof}

Proposition~\ref{prop:residual} could be reformulated in terms of decay characters.
Indeed, the elements of~${\mathcal{A}}^{-2\alpha}_{2,\infty}\cap L^2_\sigma(\R^n)$,
are precisely those for which a decay character (equal to~$\alpha-n/2$) does exist. 
See~\cite[Proposition~4.3]{MR3493117}.

We are now in the position proving Theorem~\ref{th:genler} and Theorem~\ref{th:genler2}.

\begin{proof}[Proof of Theorem~\ref{th:genler}]
Let us consider the class $\mathcal{L}$ of all Leray's solutions
to the unforced Navier--Stokes equations, endowed 
with the initial topology induced by the map 
$I\colon \mathcal{L}\to  L^2_\sigma$,
where $I$ is the map $\boldsymbol u\mapsto \boldsymbol u_0$ 
that associates, to a solution $\boldsymbol u$, the corresponding initial datum. 
This map is surjective, owing to Leray's theorem. 
(It is not known yet if the map $I$ is injective or not).
But $L^2_\sigma$ is a complete metric space, hence combining Baire's theorem 
with Lemma~\ref{lem:baire} implies that $\mathcal{L}$
is a Baire space. 
Moreover, the set of unforced Leray's solutions with algebraic decay 
is the set $I^{-1}(A)$, where 
$A\subset L^2_\sigma$ is the set introduced in Eq.~\eqref{eq:setA}.
But~$A$ is meager by Proposition~\ref{prop:meag}.
  This set $I^{-1}(A)$ is then meager in
$\mathcal{L}$, because of Lemma~\ref{lem:baire}.
\end{proof}

\begin{proof}[Proof of Theorem~\ref{th:genler2}]
Let us first observe that the solution-to-datum map $I\colon 
\mathcal{D}_\alpha\to\dot B^{-2\alpha}_{2,\infty}\cap L^2_\sigma(\RR^n)$ 
is surjective.
Indeed, if $\boldsymbol u_0\in  \dot B^{-2\alpha}_{2,\infty}(\RR^n)$, 
then $\|e^{t\Delta}\boldsymbol u_0\|=O(t^{-\alpha})$ 
by the Besov space theory~\cite[Chapter 2]{Chemin2011}. 
But then, 
for $\boldsymbol u_0 \in L^2_\sigma
\cap  \dot B^{-2\alpha}_{2,\infty}(\RR^n)$, 
Wiegner's theorem
ensures that a solution $\boldsymbol u\in \mathcal{D}_\alpha$, 
arising from $\boldsymbol u_0$
does exist.
The class $\mathcal{D}_\alpha$ is a Baire space, 
owing to Lemma~\ref{lem:baire} 
and and to the completeness of 
$L^2_\sigma \cap  \dot B^{-2\alpha}_{2,\infty}(\RR^n)$.

Combining Proposition~\ref{prop:residual} 
with Lemma~\ref{lem:baire}, we see that
$I^{-1}(\dot{\mathcal{A}}^{-2\alpha}_{2\infty}\cap L^2_\sigma)$ 
is residual in $\mathcal{D}_{\hspace{-0.010cm}\alpha}$.
But, by Theorem~\ref{th:char} and Corollary~\ref{cor:tsb},
$I^{-1}(\dot{\mathcal{A}}^{-2\alpha}_{2\infty}\cap L^2_\sigma)$ 
is precisely the set of Leray's solutions 
of the unforced Navier-Stokes equations 
such that the two sided estimates
$(1+t)^{-\alpha}\lesssim
\|\boldsymbol u(\cdot,t)\|_{L^2}\lesssim (1+t)^{-\alpha}\hspace{-0.042cm}$
do hold.
\end{proof}

Due to Remark~\ref{remark1}, for $2\le n\le 4$, the topological results of the present section could can be naturally extended to higher derivatives.


%
%
%
%

\bibliographystyle{plain}
\bibliography{BrandolesePerusatoZinganoBiblio}{}

\mbox{} \vspace{-0.550cm} \\
%
%
	
\end{document}